\title{On the Whitney disks in Heegaard Floer homology theory}
\author{Shengwen Xie and Xuezhi Zhao }
\newtheorem{theorem}{Theorem}
\newtheorem{prop}[theorem]{Proposition}
\newtheorem{remark}[theorem]{Remark}
\newtheorem{lemma}[theorem]{Lemma}
\newtheorem{definition}[theorem]{Definition}
\newtheorem{corollary}[theorem]{Corollary}
\newcommand\sym{\mathrm{Sym}}
\newcommand\cvp{\mathrm{CVP}}
\numberwithin{theorem}{section}
\begin{document}

\maketitle

\begin{abstract}
The Whitney disks play a central role in defining Heegaard Floer homology of a $3$-dimensional manifold. We use Nielsen theory to a simple criterion to  the existence of Whitney disks, connecting two given intersections.


\end{abstract}

\section{Introduction} 

Heegaard Floer homology is a collection of invariants for closed oriented
$3$-manifolds, introduced by Peter Ozsv\'{a}th and Zolt\'{a}n Szab\'{o} (\cite{spincc} and \cite{spincc-2}), and becomes a very powerful tool in the study of lower-dimensional topology. For example, it gives an alternate proof of the Donaldson's diagonalization theorem and the Thom conjecture for $CP^2$. It can be used to detect the Thurston norm of a $3$-manifold (\cite{Ni}, \cite{OZ-Thurston-norm}). Knot Floer homology detects the genus (\cite{OZ-knot-genus}) and
fiberedness (\cite{Ni-fibred-knot}) of knots and links in the $S^3$.

The computation of Heegaard Floer homology is highly non-trivial.  The generators of Heegaard Floer homology are the intersections of two tori in a symmetric product of two closed oriented surfaces, and the differentials count the number of points in certain moduli spaces of holomorphic disks connecting two intersection points, which are hard to compute in general. There are some attempts to deal with this computation, significative works can be found in \cite{Lipshitz} and \cite{Wang}. The main idea is to give  some combinatorial (or topological) descriptions of the chain complex of Heegaard Floer homology.

In this paper, we try to use generalized Nielsen fixed point theory to give a new treatment the chain complex of Heegaard Floer homology. We show that there exist a Whitney disk connecting two intersections $\mathbf{x}$ and $\mathbf{y}$ of totally real tori, which is a necessary condition to guarantee the existence of a differential between $\mathbf{x}$ and $\mathbf{y}$, if and only if the pre-image of these two intersections $\mathbf{x}$ and $\mathbf{y}$ lie in the same common value class. The common value classes are used to estimate the number of intersection homotopically (see \cite{Zhao}), and generalize the idea of Niesen fixed point theory (\cite{Brown} and \cite{Jiang}).

Thank to classical Nielsen fixed point theory, we develop a coordinate for each intersection, which takes value in the first homology of underlying $3$-manifold. Moreover, a practical method to compute these coordinates is given. A relation between fixed point and intersection was noticed in \cite{McCord1997}. Another different kind of relation between fixed point theory and Floer homology was introduced in \cite{R-Gautschi}. It our hope that the Nielsen fixed point theory may bring more to character Heegaard Floer homology. 

This paper is organized as follows: Section 2 contains a brief review of the definition of Heedaard Floer homology. In section 3, we extend the coordinates of fixed point classes into the case of common value classes. The correspondence of coordinates during a homotopy is also addressed. Section 4 devotes a new  
criterion to the existance of Whitney. The final section illustrates a way to compute the  coordinates of common value classes, and hence obtain a method to check if there is a Whitney disk between two given intersections. Moreover, the existence of a Whitney disk with Maslov index one is also discussed.

\section{A brief review of Heegaard Floer homology}

In this section, we will review some basic definitions and properties of Heegaard Floer homologies of $3$-manifolds. A brief introduction to the entire theory can be found in \cite{Whitneydisk}, while more details can be found in \cite{spinc}.

It is well-known that every closed, oriented $3$-dimensional manifold can be regarded as a union of two handlebodies. Such a decomposition of a $3$-dimensional manifold is called a Heegaard splitting.

Any Heegaard splitting of a $3$-dimensional manifold $M$ is described by a Heegaard diagram $(\Sigma_g, \mathbf{\alpha}, \mathbf{\beta})$, which consists of a closed oriented surface $\Sigma_g$ of genus $g$, along with two collections of disjoint simple loops $\mathbf{\alpha} = \{\alpha_1, \ldots, \alpha_g\}$ and $\mathbf{\beta}=\{\beta_1, \ldots, \beta_g\}$, satisfying the property that $\Sigma_g-\mathbf{\alpha}$ and $\Sigma_g -\mathbf{\beta}$ are both punctured spheres.

Given a Heegaard diagram $(\Sigma_g, \mathbf{\alpha}, \mathbf{\beta})$, the corresponding $3$-dimensional manifold is obtained as follows:
\begin{itemize}
    \item attaching a disk at each $\alpha_i$ on one side of $\Sigma_g$ for $i=1,2,\ldots, g$,
    \item attaching a disk at each $\beta_i$ on the other side of $\Sigma_g$ for $i=1,2,\ldots, g$,
    \item filling two $3$-balls. 
\end{itemize} 

The symmetric product $\sym_g(\Sigma_g)$ is defined as the orbit space $\Sigma_g^g / S_g$, where $S_g$ is the symmetric group on $g$ elements, and is actually a symplectic manifold. Given a Heegaard diagram $(\Sigma_g, \alpha_1,\ldots,\alpha_g, \beta_1,\ldots,\beta_g)$, since $\mathbf{\alpha}$ and $\mathbf{\beta}$ are both collections of simple loops, there are two $g$-dimensional tori 
$$T_\alpha:=\{(x_1,\ldots,x_g) \mid x_i \in \alpha_i\}/S_g,\ 
  T_\beta: =\{(y_1,\ldots,y_g) \mid y_i \in \beta_i\}/S_g,$$ 
called totally real tori in the symmetric space $\sym_g(\Sigma_g)$ (\cite{Fukaya}, \cite{Whitneydisk}). 

The intersection $T_\alpha\cap T_\beta$ of two totally real tori play a central role in the theory of Heedaard Floer homology.

\begin{definition}\label{disk} 
(\cite[Definition 5.1]{Whitneydisk})
Let $x, y \in T_\alpha\cap T_\beta$ be a pair of intersection points of the two tori. A Whitney disk connecting $x$ and $y$ is a map 
$(\mathbb{D}^2, e_1, e_2, -i, i ) \to (\sym_g(\Sigma_g),T_\alpha,T_{\beta}, x,y ),$ 
where $\mathbb{D}^2$ is the unit disk in the complex plane $\mathbb{C}$, $e_1, e_2$ are the arcs in the boundary
of $\mathbb{D}^2$ with $\mathrm{Re} \geq 0$ and $\mathrm{Re} \leq 0$.  
\end{definition}

Clearly, there is a Whitney disk connecting $x$ and $y$ if and only if there exist paths $\gamma': (I,0,1) \to (T_{\alpha}, x, y)$ and $\gamma'': (I, 0,1) \to (T_\beta, x, y)$ such that $\gamma' \simeq \gamma'': (I, 0,1) \to (\sym_g(\Sigma_g), x, y)$. In Heegaard Floer homology theory, we need to look for holomorphic representatives of the disks. The moduli space of holomorphic representatives of the disks $\phi$ is written as $\mathcal{M}(\phi)$.

\begin{definition}\label{def-HF}
Let $t\in \text{Spin}^C(Y)$ be a $\text{Spin}^C$ structure of a $3$-manifold $Y$, and $(\Sigma_g, \alpha_1,\ldots,\alpha_g ,\beta_1,\ldots,\beta_g, z)$ be a based Heegaard diagram of $Y$. The chain complex $\widehat{CF}(\alpha, \beta, t)$ is a free Abelian group generated by the points $x\in T_\alpha \cap T_\beta$ with $s_z(x) = t$, and the boundary homomorphism is given by
\[
\partial x =\sum_{y\in T_\alpha \cap T_\beta, \phi\in \pi_2(x,y)\mid 
s_z(y) = t, n_z(\phi)=0} c(\phi) y,
\]
where $c(\phi)$ is defined to be the sign number of the unparametrized moduli space $\hat{\mathcal{M}}(\phi)=\mathcal{M}(\phi)/\mathbb{R}$ if the Maslov index of $\mu(\phi)=1$; and $0$ otherwise.
\end{definition}

\section{Common value classes} 

We give a brief review of the theory of common value classes (see \cite{Zhao} for more details), which is a natural generalization of Nielsen fixed point theory \cite{Jiang}. After that we shall introduce a kind of coordinate for each common value class.

The concept of common value gives an approach to consider the intersection of two maps. Let $\varphi, \psi\colon X\to Y$ be two maps. Instead of thinking about the intersections  $\mathrm{Im}(\varphi)\cap \mathrm{Im}(\psi)$ in target space, we consider the pre-images at these intersections.

\begin{definition}(\cite[Definition 4.1]{Zhao})
Let $\varphi, \psi\colon X\to Y$ be two maps. The set of common value pairs $\cvp(\varphi,\psi)$ of $\varphi$ and $\psi$ is defined to be 
$$(\varphi\times \psi)^{-1}(\Delta_{Y^2})=\{(u, v)\in X^2 \mid \varphi(u) = \psi(v)\}.$$
\end{definition}

It is easy of check that the set $\cvp(\varphi, \psi)$ of common value pairs is a union of the projections of the sets of common value pairs of liftings of $\varphi$ and those of $\psi$ with respect to their universal coverings $p_X: \tilde X \to X$ and $p_Y: \tilde Y \to Y$.  By a regular arguments in Nielsen fixed point theory, for any two liftings $\tilde \varphi'$ and $\tilde \varphi''$ of $\varphi$ and any two two liftings $\tilde \psi'$ and $\tilde \psi''$ of $\psi$, the projection $p_X(\cvp(\tilde \varphi', \tilde \psi'))$ of the set $\cvp(\tilde \varphi', \tilde \psi')$ of common value pairs of $\tilde \varphi'$ and $\tilde \psi'$ and the projection $p_X(\cvp(\tilde \varphi'', \tilde \psi''))$ of the set $\cvp(\tilde \varphi'', \tilde \psi'')$ of common value pairs of $\tilde \varphi''$ and $\tilde \psi''$  is the same or distinct subset of $X^2$. Of course, the set of common value pairs may be empty, and the set $p_X(\cvp(\tilde \varphi, \tilde \psi))$ is said to be the common value class of $\varphi$ and $\psi$ determined by $(\tilde \varphi, \tilde \psi)$ (see \cite[Definition 4.3]{Zhao}). 

Fix a lifitng $\tilde \varphi_0$ of $\varphi$ and a lifting $\tilde \psi_0$ as references, each lifting of $\varphi$ can be written uniquely as $\gamma \tilde \varphi_0$ for some $\gamma$ in the deck trans formation group $D(\tilde Y)$ of $\tilde Y$,  while each lifting of $\psi$ can be written uniquely as $\delta \tilde \psi_0$ for some $\delta$ in the deck trans formation group $D(\tilde Y)$ of $\tilde Y$. Clearly, we have
$$\cvp(\varphi,\psi) = \cup_{\gamma\in D(\tilde Y)} p_X(\cvp (\tilde \varphi_0, \gamma \tilde \psi_0)).$$
Moreover, we have

\begin{prop}(\cite[Proposition 4.6]{Zhao})
Let $\tilde \varphi_0$ and $\tilde \psi_0$ be respectively liftings of $\varphi$ and $\psi$. Then
$(\tilde \varphi_0, \delta_1\tilde \psi_0)$ and $(\tilde \varphi_0, \delta_2\tilde \psi_0)$ determine the same common value class  if and only if $\delta_2 = \tilde f_{D}(\alpha^{-1})\delta_1\tilde \psi_{D}(\gamma)$ for some $\alpha, \gamma\in D(\tilde X)$, where $\tilde \varphi_{D}\colon D(\tilde X)\to D(\tilde Y)$ is the homomorphism determined by the relation $\tilde \varphi_0(\eta \tilde x) = \tilde \varphi_{D}(\eta)\tilde \varphi_0(\tilde x)$ for all $\eta\in D(\tilde X)$ and $\tilde x\in \tilde X$, and $\tilde \psi_{D}$ is defined similarly.
\end{prop}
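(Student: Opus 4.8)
The plan is to prove Proposition 3.4 (the displayed statement: $(\tilde\varphi_0,\delta_1\tilde\psi_0)$ and $(\tilde\varphi_0,\delta_2\tilde\psi_0)$ determine the same common value class iff $\delta_2=\tilde\varphi_D(\alpha^{-1})\,\delta_1\,\tilde\psi_D(\gamma)$ for some $\alpha,\gamma\in D(\tilde X)$) by unwinding the definition of common value class in terms of projections and using the equivariance of liftings under deck transformations. Recall that the common value class of the pair $(\tilde\varphi_0,\delta_1\tilde\psi_0)$ is $p_X\bigl(\cvp(\tilde\varphi_0,\delta_1\tilde\psi_0)\bigr)\subseteq X^2$, and two such projections are (by the cited regularity argument) either equal or disjoint. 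So the statement is really the computation of when two of these projected sets coincide.

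First I would establish the key equivariance lemma: for $\alpha,\gamma\in D(\tilde X)$, one has $\cvp(\tilde\varphi_0\circ\alpha,\ \delta_1\tilde\psi_0\circ\gamma) = (\alpha^{-1}\times\gamma^{-1})\bigl(\cvp(\tilde\varphi_0,\delta_1\tilde\psi_0)\bigr)$ as subsets of $\tilde X^2$, simply because $(\tilde u,\tilde v)$ satisfies $\tilde\varphi_0(\alpha\tilde u)=\delta_1\tilde\psi_0(\gamma\tilde v)$ iff $(\alpha\tilde u,\gamma\tilde v)\in\cvp(\tilde\varphi_0,\delta_1\tilde\psi_0)$. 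Since $p_X\circ\alpha = p_X$ for any deck transformation $\alpha$, this shows $p_X\bigl(\cvp(\tilde\varphi_0\circ\alpha,\delta_1\tilde\psi_0\circ\gamma)\bigr) = p_X\bigl(\cvp(\tilde\varphi_0,\delta_1\tilde\psi_0)\bigr)$, i.e. precomposing the liftings by deck transformations of $\tilde X$ does not change the common value class. Next, using the defining relations $\tilde\varphi_0\circ\alpha = \tilde\varphi_D(\alpha)\circ\tilde\varphi_0$ and $\tilde\psi_0\circ\gamma = \tilde\psi_D(\gamma)\circ\tilde\psi_0$, I rewrite $\tilde\varphi_0\circ\alpha = \tilde\varphi_D(\alpha)\tilde\varphi_0$ and $\delta_1\tilde\psi_0\circ\gamma = \delta_1\tilde\psi_D(\gamma)\tilde\psi_0$. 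Multiplying the first pair of liftings on the left by $\tilde\varphi_D(\alpha)^{-1}=\tilde\varphi_D(\alpha^{-1})$ (which also leaves $\cvp$ intact after projecting, since $p_X$ is unchanged and the two liftings get the same left factor on the target — this uses that $\cvp(\lambda\tilde\varphi,\lambda\tilde\psi)=\cvp(\tilde\varphi,\tilde\psi)$ for $\lambda\in D(\tilde Y)$, as $\lambda$ is injective), we get that $(\tilde\varphi_0,\ \tilde\varphi_D(\alpha^{-1})\delta_1\tilde\psi_D(\gamma)\tilde\psi_0)$ determines the same common value class as $(\tilde\varphi_0,\delta_1\tilde\psi_0)$. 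This proves the "if" direction: taking $\delta_2 = \tilde\varphi_D(\alpha^{-1})\delta_1\tilde\psi_D(\gamma)$ works.

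For the converse, suppose $(\tilde\varphi_0,\delta_1\tilde\psi_0)$ and $(\tilde\varphi_0,\delta_2\tilde\psi_0)$ determine the same (nonempty — the empty case must be handled separately or absorbed) common value class, so $p_X\bigl(\cvp(\tilde\varphi_0,\delta_1\tilde\psi_0)\bigr) = p_X\bigl(\cvp(\tilde\varphi_0,\delta_2\tilde\psi_0)\bigr)$. Pick $(\tilde u,\tilde v)\in\cvp(\tilde\varphi_0,\delta_2\tilde\psi_0)$; its image in $X^2$ lies in the other projected set, so there is $(\tilde u',\tilde v')\in\cvp(\tilde\varphi_0,\delta_1\tilde\psi_0)$ with $p_X(\tilde u')=p_X(\tilde u)$, $p_X(\tilde v')=p_X(\tilde v)$, hence $\tilde u' = \alpha^{-1}\tilde u$, $\tilde v' = \gamma^{-1}\tilde v$ for unique $\alpha,\gamma\in D(\tilde X)$. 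Then from $\tilde\varphi_0(\alpha^{-1}\tilde u) = \delta_1\tilde\psi_0(\gamma^{-1}\tilde v)$ and $\tilde\varphi_0(\tilde u) = \delta_2\tilde\psi_0(\tilde v)$, applying the equivariance relations and cancelling (injectivity of deck transformations acting on $\tilde Y$) yields $\delta_2\tilde\psi_0(\tilde v) = \tilde\varphi_D(\alpha)\delta_1\tilde\psi_D(\gamma^{-1})\tilde\psi_0(\tilde v)$, and since this must hold as an identity of liftings (two liftings of $\psi$ agreeing at a point agree everywhere), $\delta_2 = \tilde\varphi_D(\alpha)\delta_1\tilde\psi_D(\gamma^{-1})$; replacing $\alpha$ by $\alpha^{-1}$ and $\gamma$ by $\gamma^{-1}$ gives exactly the asserted form.

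The main obstacle I anticipate is the bookkeeping of left- versus right-actions and making sure the identities of liftings (rather than merely pointwise equalities) are correctly invoked — in particular, the step "two liftings of the same map that agree at one point are equal" is what upgrades the pointwise common-value equation to the group identity $\delta_2 = \tilde\varphi_D(\alpha^{-1})\delta_1\tilde\psi_D(\gamma)$, and one must be careful that $\cvp(\tilde\varphi_0,\delta_2\tilde\psi_0)$ is nonempty for this argument to even start (the case where the class is empty should be dealt with by noting both sides of the "iff" are then vacuous, or by restricting attention, as is standard in Nielsen theory, to classes that may or may not be essential). A secondary point requiring care is confirming that $\tilde\varphi_D$ and $\tilde\psi_D$ are genuine homomorphisms and that $\tilde\varphi_D(\alpha)^{-1} = \tilde\varphi_D(\alpha^{-1})$, which is immediate from their defining cocycle-type relation but worth stating. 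Everything else is a direct unwinding of definitions already set up in the excerpt.
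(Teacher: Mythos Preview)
The paper does not supply its own proof of this proposition: it is quoted verbatim from \cite[Proposition 4.6]{Zhao} and used as a black box, so there is nothing in the present paper to compare your argument against. That said, your argument is the standard Nielsen-theory proof and is correct. The ``if'' direction is exactly the computation that precomposing each lifting by a deck transformation of $\tilde X$ (which does not change the projected $\cvp$) followed by a common left translation in $D(\tilde Y)$ (which does not change $\cvp$ at all) takes $(\tilde\varphi_0,\delta_1\tilde\psi_0)$ to $(\tilde\varphi_0,\tilde\varphi_D(\alpha^{-1})\delta_1\tilde\psi_D(\gamma)\tilde\psi_0)$. For the ``only if'' direction your pointwise argument, upgraded via the uniqueness of liftings, is the expected one; your caveat about the empty class is appropriate, since in Nielsen theory one either restricts to nonempty classes or, equivalently, regards the statement as an assertion about the double-coset parametrization of \emph{lifting pairs} (where the issue does not arise). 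The only cosmetic point is that the statement writes $\tilde f_D$ where $\tilde\varphi_D$ is clearly intended; your proof already reads it that way.
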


By this proposition, each common value class of $\varphi$ and $\psi$ has a well-defined coordinate in the double coset $\varphi_D(D(\tilde X))\backslash D(\tilde Y)/\psi_D(D(\tilde X))$. It should be mentioned that some elements may not corresponds a ``real'' common value class, because $\cvp(\tilde \varphi_0, \tilde \gamma \psi_0)$ may be empty even if $\cvp(\varphi, \psi)\ne \emptyset$. 
Recall that any point in universal covering  can be regarded as a path class starting at the given base point. The left-action of deck transformation can be regarded as path product.

\begin{lemma}\label{cvp-coordinate}
Let $\varphi, \psi: X\to Y$ be two maps, $x_0$ and $y_0$ be respectively based points of $X$ and $Y$, and $\tilde \varphi_0$ and $\tilde \psi_0$ are respectively reference liftings of $\varphi$ and $\psi$, which are respectively determined by a path $w_\varphi$ from $y_0$ to $\varphi(x_0)$ and a path $w_\psi$ from $y_0$ to $\psi(x_0)$. Then the double coset coordinate in $\tilde \varphi_{0, D}\backslash D(\tilde Y)/\tilde \psi_{0, D}$ of common value pair $(u, v)$ of $\varphi$ and $\psi$ is given by $\langle w_\varphi \varphi(c_\varphi) \psi(c_\psi^{-1})w_\psi^{-1} \rangle$, where $c_\varphi$ is a path from $x_0$ to $u$ in $X$ and $c_\psi$ is a path from $x_0$ to $v$ in $X$.
\end{lemma}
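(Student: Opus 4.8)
The plan is to unwind the definitions of the reference liftings and of the double-coset coordinate attached to a common value pair, translating everything into path classes via the standard dictionary ``point of the universal cover $=$ homotopy class of path from the base point''. First I would set up notation: write $\tilde x_0$ for the base point of $\tilde X$ lying over $x_0$ (the constant path), and recall that the reference lifting $\tilde\varphi_0$ is defined by sending $\tilde x_0$ to the point of $\tilde Y$ represented by $w_\varphi$, i.e. by the path from $y_0$ to $\varphi(x_0)$; similarly $\tilde\psi_0(\tilde x_0) = \langle w_\psi\rangle$. More generally, since a point of $\tilde X$ over a point $u$ is a path class $\langle c_\varphi\rangle$ from $x_0$ to $u$, functoriality of lifts gives $\tilde\varphi_0(\langle c_\varphi\rangle) = \langle w_\varphi\,\varphi(c_\varphi)\rangle$ and $\tilde\psi_0(\langle c_\psi\rangle) = \langle w_\psi\,\psi(c_\psi)\rangle$ as points of $\tilde Y$, where juxtaposition is path concatenation.

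Next I would compute the deck transformation $\delta\in D(\tilde Y)$ for which $(\tilde\varphi_0,\delta\tilde\psi_0)$ actually ``sees'' the given common value pair $(u,v)$. By definition $(u,v)$ lies in $p_X(\cvp(\tilde\varphi_0,\delta\tilde\psi_0))$ precisely when there are lifts $\widetilde u\in p_X^{-1}(u)$, $\widetilde v\in p_X^{-1}(v)$ with $\tilde\varphi_0(\widetilde u) = \delta\tilde\psi_0(\widetilde v)$. Taking $\widetilde u=\langle c_\varphi\rangle$ and $\widetilde v=\langle c_\psi\rangle$, and using $\varphi(u)=\psi(v)$ so that $\varphi(c_\varphi)$ and $\psi(c_\psi)$ end at the same point, the equation $\langle w_\varphi\,\varphi(c_\varphi)\rangle = \delta\cdot\langle w_\psi\,\psi(c_\psi)\rangle$ in $\tilde Y$ forces, under the identification of $D(\tilde Y)$ with $\pi_1(Y,y_0)$ acting by left concatenation of loops,
\[
\delta = \bigl\langle\, w_\varphi\,\varphi(c_\varphi)\,\psi(c_\psi)^{-1}\,w_\psi^{-1}\,\bigr\rangle .
\]
This loop is based at $y_0$ and is well defined up to homotopy because $\varphi(c_\varphi)$ and $\psi(c_\psi)^{-1}$ compose at the common point $\varphi(u)=\psi(v)$. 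Passing to the double coset $\tilde\varphi_{0,D}(D(\tilde X))\backslash D(\tilde Y)/\tilde\psi_{0,D}(D(\tilde X))$ then gives exactly the claimed coordinate $\langle w_\varphi\,\varphi(c_\varphi)\,\psi(c_\psi^{-1})\,w_\psi^{-1}\rangle$.

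Finally I would check independence of the auxiliary choices, which is what makes the formula a legitimate ``coordinate.'' Replacing $c_\varphi$ by another path $c_\varphi'$ from $x_0$ to $u$ changes $\langle c_\varphi\rangle$ by an element $\alpha\in D(\tilde X)=\pi_1(X,x_0)$, hence multiplies $\delta$ on the left by $\tilde\varphi_{0,D}(\alpha)=\langle w_\varphi\,\varphi(\alpha)\,w_\varphi^{-1}\rangle$; replacing $c_\psi$ similarly multiplies $\delta$ on the right by an element of $\tilde\psi_{0,D}(D(\tilde X))$. Both changes are invisible in the double coset, so by Proposition~4.6 the class is unchanged. The one point requiring care — and the step I expect to be the main obstacle — is bookkeeping the direction conventions: matching the left-versus-right action of the deck groups with the order of path concatenation, and confirming that the homomorphisms $\tilde\varphi_{0,D}$, $\tilde\psi_{0,D}$ are realized by $\gamma\mapsto\langle w_\varphi\,\varphi(\gamma)\,w_\varphi^{-1}\rangle$ and $\gamma\mapsto\langle w_\psi\,\psi(\gamma)\,w_\psi^{-1}\rangle$ respectively; once these conventions are pinned down, the computation above goes through verbatim.
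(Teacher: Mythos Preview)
Your proposal is correct and follows essentially the same route as the paper: both interpret points of $\tilde X$ and $\tilde Y$ as path classes from the base point, use this to write $\tilde\varphi_0(\langle c_\varphi\rangle)=\langle w_\varphi\,\varphi(c_\varphi)\rangle$ and $\tilde\psi_0(\langle c_\psi\rangle)=\langle w_\psi\,\psi(c_\psi)\rangle$, solve $\tilde\varphi_0(\tilde u)=\delta\,\tilde\psi_0(\tilde v)$ for $\delta$, and then verify that changing $c_\varphi$ (resp.\ $c_\psi$) alters $\delta$ by left multiplication by an element of $\tilde\varphi_{0,D}(D(\tilde X))$ (resp.\ right multiplication by an element of $\tilde\psi_{0,D}(D(\tilde X))$). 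Your explicit flagging of the convention bookkeeping is a useful addition but does not change the argument.
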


\begin{proof}
Since $(u, v)$ is a common value pair  of $\varphi$ and $\psi$,
we have that $\tilde\varphi_0(\tilde u) = \gamma\tilde\psi_0(\tilde v)$ for some $\tilde u\in p_X^{-1}(u)$, $\tilde v\in p_X^{-1}(v)$, and $\gamma\in D(\tilde Y)$. Convert points of the universal covering space $\tilde Y$ into path classes starting at $y_0$, $\tilde u$ is regarded as a path $c_\varphi$ from $x_0$ to $u$ in $X$ and $\tilde v$ is regarded as a path $c_\psi$ from $x_0$ to $v$ in $X$. Two liftings are given by 
$$
\begin{array}{l}
 \tilde X \ni c: I, 0 \to X, x_0 \stackrel{\tilde \varphi_0}{\mapsto} w_\varphi \varphi(c):I, 0\to Y, y_0 \in \tilde Y, \\
 \tilde X \ni c: I, 0 \to X, x_0 \stackrel{\tilde \psi_0}{\mapsto} w_\psi \psi(c):I, 0\to Y, y_0 \in \tilde Y, 
\end{array}$$ 
and therefore $\tilde\varphi_0(\tilde u) = \gamma\tilde\psi_0(\tilde v)$ implies that $w_\varphi \varphi(c_\varphi) \dot{\simeq} \gamma w_\psi \psi(c_\psi)$. Here $\gamma$ is a loop at $y_0$, and hence lies in the same loop class as $w_\varphi \varphi(c_\varphi) \psi(c_\psi^{-1})w_\psi^{-1}$.
\begin{center}
\begin{tikzpicture}[scale=1]
\draw node at(2,-1) {$\bullet$};
\draw node at(2, 1) {$\bullet$};
\draw node at(0,0) {$\bullet$};
\draw node at(5,0) {$\bullet$};
\draw node at(6.2,0) {$\varphi(u)=\psi(v)$};
\draw node at(-.4,0) {$y_0$};
\draw node at(2, 1.4) {$\varphi(x_0)$};
\draw node at(2, -1.4) {$\psi(x_0)$};
\draw node at(4, -1.2) {$\psi(c_\psi)$};
\draw node at(4, 1) {$\varphi(c_\varphi)$};
\draw node at(0.7, -1) {$w_\psi$};
\draw node at(0.8, 1) {$w_\varphi$};

\draw (0,0) .. controls (1.3,1) .. (2,1);
\draw (0,0) .. controls (1.1,-1) .. (2,-1);
\draw (5,0) .. controls (3,1) .. (2,1);
\draw (5,0) .. controls (4,-1) .. (2,-1);
\end{tikzpicture}
\end{center}

If another point $\tilde u'\in p_X^{-1}(u)$ is chosen, we have another path $c'$  from $x_0$ to $u$. Thus,
the element
$$
\begin{array}{rcl}
 w_\varphi \varphi(c') \psi(c_\psi^{-1})w_\psi^{-1}
& = & w_\varphi \varphi(c'c^{-1}_\varphi) w^{-1}_\varphi w_\varphi \varphi(c_\varphi) \psi(c_\psi^{-1})w_\psi^{-1} \\
& = & \tilde \varphi_{0, D}(c'c^{-1}_\varphi)  w_\varphi \varphi(c_\varphi)\psi(c_\psi^{-1})w_\psi^{-1} 
\end{array}$$
is the same as $w_\varphi\varphi(c_\varphi) \psi(c_\psi^{-1})w_\psi^{-1}$ in double coset 
$$\tilde \varphi_{0, D}\backslash D(\tilde Y)/\tilde \psi_{0, D}=\tilde \varphi_{0, \pi}\backslash \pi_1(Y, y_0)/\tilde \psi_{0,\pi}.$$ 
Thus, the coordinate in this double coset is independent of the choice of path $c_\varphi$ initial at the base point. The proof of independency of the choice of the path $c_\psi$ is similar.
\end{proof}

By this lemma, each common value pair of $\varphi$ and $\psi$ has a well-defined  coordinate in double coset $\tilde \varphi_{0, D}\backslash D(\tilde Y)/\tilde \psi_{0, D} =\tilde \varphi_{0, \pi}\backslash \pi_1(Y, y_0)/\tilde \psi_{0,\pi}$, and two common value pairs has the same coordinates if and only if they are in the same common value class. The identification of two double cosets is given by
\[
\begin{tikzcd}
D(\tilde X) \arrow[r, "\tilde \varphi_{0, D}\  \tilde \psi_{0, D}"] \arrow[d, "\cong"]  & D(\tilde Y) \arrow[d, "\cong"] \\
\pi_1(X, x_0)
\arrow[r, "\tilde \varphi_{0, \pi}\ \tilde \psi_{0,\pi}"]
& \pi_1(Y,y_0).                    \end{tikzcd}
\]
The isomorphism is determined by $D(\tilde X)\ni\gamma \mapsto p_X \eta\in \pi_1(X,x_0)$, where $\eta$ is a path from $\tilde x_0$ (regarded as constant loop at $x_0$) to $\gamma(\tilde x_0)$. The induced homomorphism $\tilde \varphi_{0, \pi}: \pi_1(X,x_0)\to \pi_1(Y,y_0)$ is given by $\tilde \varphi_{0, \pi}(\langle c\rangle) = \langle  w_\varphi \varphi(c) w^{-1}_\varphi\rangle$.  The identification of $D(\tilde Y)$ $\pi_1(Y,y_0)$ and the homomorphism $\psi_{0, \pi}$ are given similarily. For convenience in the next part of this article we only use $\tilde \varphi_{0, \pi}\backslash \pi_1(Y, y_0)/\tilde \psi_{0,\pi}$.

If $\varphi:X\to Y$ is homotopic to $\varphi':X\to Y$ and $\psi:X\to Y$ is homotopic to $\psi':X\to Y$. Then any homotopy $H_\varphi:X\times I \to Y$ connecting $\varphi$ and $\varphi'$ gives to a one-to-one correspondence from the set of liftings of $\varphi$ to that of $\varphi'$, and any homotopy $H_\psi:X\times I \to Y$ connecting $\psi$ and $\psi'$ gives to a one-to-one correspondence from the set of liftings of $\psi$ to that of $\psi'$. 

In fact, the two correspondences of determined by two homotopies $H_\varphi$ and $H_\psi$ induce a correspondence from the set of common value classes of $\varphi$ and $\psi$ to the set of common value classes of $\varphi'$ and $\psi'$. 

\begin{definition}\label{def-Hrelated}
Let $H_\varphi: X\times I \to Y$ be a homotopy from  $\varphi:X\to Y$  to $\varphi':X\to Y$, and let $H_\psi: X\times I \to Y$ be a homotopy from  $\psi: X\to Y$  to $\psi':X\to Y$.  A common value class $p_X(\cvp(\tilde \varphi,\tilde \psi))$ of $\varphi$ and $\psi$ and a common value class $p_X(\cvp(\tilde \varphi',\tilde \psi'))$  of $\varphi'$ and $\psi'$  are said to be $(H_\varphi, H_\psi)$-related if $\tilde \varphi$ and  $\tilde \varphi'$  are respectively $0$- and $1$-slices of a lifting $\tilde H_\varphi:\tilde X\times I \to \tilde Y$ of $H_\varphi$, and $\tilde \psi$ and  $\tilde \psi'$  are respectively $0$- and $1$-slices of a lifting $\tilde H_\psi:\tilde X\times I \to \tilde Y$ of $H_\psi$.
\end{definition}
\begin{remark}
A common value class of $\varphi$ and $\psi$ is $(I_\varphi, I_\psi)$-related to itself where $I_\varphi(\_,t)=\varphi,I_\psi(\_,t)=\psi$.
\end{remark}
\begin{theorem}\label{eq H related}
     Let $H_\varphi: X\times I \to Y$ be a homotopy from  $\varphi:X\to Y$  to $\varphi':X\to Y$, and let $H_\psi: X\times I \to Y$ be a homotopy from  $\psi: X\to Y$  to $\psi':X\to Y$. A common value class $C$ of $\varphi$ and $\psi$ and a common value class $C'$ of $\varphi'$ and $\psi'$ are $(H_\varphi,H_\psi)$-related if and only if there exist $(u,v)\in C$ and $(u',v')\in C'$, a path $a$ from $u$ to $u'$ and path $b$ from $v$ to $v'$ such that $\{H_\varphi(a(t),t)\}_{t\in I}\dot{\simeq} \{H_\psi(b(t),t)\}_{t\in I}$ 
\end{theorem}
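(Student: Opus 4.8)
The plan is to prove both directions by lifting the tracks $t\mapsto H_\varphi(a(t),t)$ and $t\mapsto H_\psi(b(t),t)$, and the homotopies $H_\varphi,H_\psi$ themselves, to the universal covers $p_X\colon\tilde X\to X$ and $p_Y\colon\tilde Y\to Y$, and then reading the two sides of the equivalence off the unique path-lifting and homotopy-lifting properties of covering maps. Throughout I assume $C$ and $C'$ are nonempty, so that the points $(u,v)$ and $(u',v')$ in the statement are available; this is the one place where the statement needs a small caveat, since a pair of liftings can have empty set of common value pairs even when $\cvp(\varphi,\psi)\neq\emptyset$.

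For the ``only if'' direction, suppose $C$ and $C'$ are $(H_\varphi,H_\psi)$-related, witnessed by a lifting $\tilde H_\varphi$ of $H_\varphi$ with $0$-slice $\tilde\varphi$ and $1$-slice $\tilde\varphi'$, and a lifting $\tilde H_\psi$ of $H_\psi$ with $0$-slice $\tilde\psi$ and $1$-slice $\tilde\psi'$, where $C=p_X(\cvp(\tilde\varphi,\tilde\psi))$ and $C'=p_X(\cvp(\tilde\varphi',\tilde\psi'))$. Pick $(u,v)\in C$ together with lifts $\tilde u,\tilde v\in\tilde X$ with $\tilde\varphi(\tilde u)=\tilde\psi(\tilde v)$, and $(u',v')\in C'$ with lifts $\tilde u',\tilde v'$ with $\tilde\varphi'(\tilde u')=\tilde\psi'(\tilde v')$. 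Using path-connectedness of $\tilde X$, choose a path $\tilde a$ from $\tilde u$ to $\tilde u'$ and a path $\tilde b$ from $\tilde v$ to $\tilde v'$ in $\tilde X$, and set $a=p_X\circ\tilde a$, $b=p_X\circ\tilde b$. Then $t\mapsto\tilde H_\varphi(\tilde a(t),t)$ and $t\mapsto\tilde H_\psi(\tilde b(t),t)$ are paths in $\tilde Y$ with common initial point $\tilde\varphi(\tilde u)=\tilde\psi(\tilde v)$ and common terminal point $\tilde\varphi'(\tilde u')=\tilde\psi'(\tilde v')$; since $\tilde Y$ is simply connected they are homotopic rel endpoints, and applying $p_Y$ to that homotopy yields $\{H_\varphi(a(t),t)\}_{t\in I}\dot{\simeq}\{H_\psi(b(t),t)\}_{t\in I}$, because $p_Y\circ\tilde H_\varphi=H_\varphi\circ(p_X\times\mathrm{id})$ and $p_X\circ\tilde a=a$ (similarly for $\psi$).

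For the ``if'' direction, given $(u,v)\in C$, $(u',v')\in C'$, paths $a,b$ and a rel-endpoint homotopy $\{H_\varphi(a(t),t)\}\dot{\simeq}\{H_\psi(b(t),t)\}$, fix a lift pair $(\tilde\varphi,\tilde\psi)$ defining $C$ and lifts $\tilde u,\tilde v$ with $\tilde\varphi(\tilde u)=\tilde\psi(\tilde v)$ (these exist because $(u,v)\in C$). Lift $a$ starting at $\tilde u$ and $b$ starting at $\tilde v$ to paths $\tilde a,\tilde b$ in $\tilde X$, with terminal points $\tilde u'\in p_X^{-1}(u')$, $\tilde v'\in p_X^{-1}(v')$, and lift $H_\varphi$, $H_\psi$ to $\tilde H_\varphi$, $\tilde H_\psi$ with $0$-slices $\tilde\varphi$, $\tilde\psi$, writing $\tilde\varphi'$, $\tilde\psi'$ for their $1$-slices. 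The two paths $t\mapsto\tilde H_\varphi(\tilde a(t),t)$ and $t\mapsto\tilde H_\psi(\tilde b(t),t)$ are lifts of the two homotopic tracks sharing the initial point $\tilde\varphi(\tilde u)=\tilde\psi(\tilde v)$, so by the homotopy lifting property they share a terminal point, i.e. $\tilde\varphi'(\tilde u')=\tilde\psi'(\tilde v')$; hence $(u',v')\in p_X(\cvp(\tilde\varphi',\tilde\psi'))$. Since two common value classes are equal or disjoint and $(u',v')\in C'$ as well, $C'=p_X(\cvp(\tilde\varphi',\tilde\psi'))$, and as $\tilde\varphi,\tilde\varphi'$ and $\tilde\psi,\tilde\psi'$ are the $0$- and $1$-slices of $\tilde H_\varphi,\tilde H_\psi$, Definition \ref{def-Hrelated} shows $C$ and $C'$ are $(H_\varphi,H_\psi)$-related.

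I expect the argument to be essentially routine covering-space bookkeeping; the only step that really has to be watched is the identification of the track $t\mapsto H_\varphi(a(t),t)$ with the $p_Y$-image of $t\mapsto\tilde H_\varphi(\tilde a(t),t)$ (immediate from $p_Y\circ\tilde H_\varphi=H_\varphi\circ(p_X\times\mathrm{id})$), together with the nonemptiness caveat noted above. The substantive inputs are merely that $\tilde X$ is path-connected, $\tilde Y$ is simply connected, and covering maps enjoy unique path and homotopy lifting.
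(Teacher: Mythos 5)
Your proof is correct and follows essentially the same route as the paper's: lift $a,b$ to $\tilde X$, lift the homotopies to $\tilde H_\varphi,\tilde H_\psi$, compare the two tracks in $\tilde Y$, and use the simple connectedness of $\tilde Y$ (for ``only if'') and unique path/homotopy lifting (for ``if''). The only cosmetic difference is that you fix the lift pair $(\tilde\varphi,\tilde\psi)$ representing $C$ first and then choose $\tilde u,\tilde v$, whereas the paper chooses $\tilde u,\tilde v$ first and then the unique liftings sending them to $\tilde c(0),\tilde d(0)$; these are interchangeable, and your explicit ``classes are equal or disjoint'' step makes the conclusion $C'=p_X(\cvp(\tilde\varphi',\tilde\psi'))$ cleaner than the paper leaves it. Your nonemptiness caveat is also a fair observation that the paper glosses over.
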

\begin{proof}
   only If:  A common value class $C$ of $\varphi$ and $\psi$ and a common value class $C'$ of $\varphi'$ and $\psi'$ are $(H_\varphi,H_\psi)$-related, so there exist liftings $\tilde{\varphi}, \tilde{\varphi'}, \tilde{\psi}, \tilde{\psi'}$ such that $C=p_X(\cvp(\tilde \varphi,\tilde \psi)), C'= p_X(\cvp(\tilde \varphi',\tilde \psi'))$ and $\tilde \varphi$ and  $\tilde \varphi'$  are respectively $0$- and $1$-slices of a lifting $\tilde H_\varphi:\tilde X\times I \to \tilde Y$ of $H_\varphi$, and $\tilde \psi$ and  $\tilde \psi'$  are respectively $0$- and $1$-slices of a lifting $\tilde H_\psi:\tilde X\times I \to \tilde Y$ of $H_\psi$.Thus there exist $(u,v)\in C$, $(u',v')\in C'$ and $(\tilde u,\tilde v)\in \cvp(\tilde \varphi,\tilde \psi)$ , $(\tilde u',\tilde v')\in \cvp(\tilde \varphi',\tilde \psi')$ such that $P_X(\tilde u,\tilde v)=(u,v)$, $P_X(\tilde u',\tilde v')=(u',v')$. 
   
   For the connectivity of $\tilde X $ there exist a path $\tilde a$ from $\tilde u$ to $\tilde u'$ and path $\tilde b$ from $\tilde v$ to $\tilde v'$ .The path   $\{\tilde H_\varphi(\tilde a(t),t)\}_{t\in I}$ 
 from $\tilde H_\varphi(\tilde a(0),0)=\tilde\varphi(\tilde u)$ to $\tilde H_\varphi(\tilde a(1),1)=\tilde \varphi'(\tilde v')$ and the path $\{\tilde H_\psi(\tilde b(t),t)\}_{t\in I}$ from $\tilde H_\psi(\tilde b(0),0)=\tilde \psi(\tilde u)$ to $\tilde H_\psi(\tilde b(1),1)=\tilde \psi'(\tilde u')$ have the same  starting point and end point . 
 
 For the universal space $\tilde Y $ is simply connected, we have  
 
 $$\{\tilde H_\varphi(\tilde a(t),t)\}_{t\in I}\dot{\simeq} \{\tilde H_\psi(\tilde b(t),t)\}_{t\in I}$$ which implies $\{ H_\varphi(P_X(\tilde a(t)),t)\}_{t\in I}\dot{\simeq} \{ H_\psi(P_X(\tilde b(t)),t)\}_{t\in I}$.Thus we find a path $a=P_X(\tilde a(t))$ from $u$ to $u'$ and path $b=P_X(\tilde b(t))$ from $v$ to $v'$ such that $\{H_\varphi(a(t),t)\}_{t\in I}\dot{\simeq} \{H_\psi(b(t),t)\}_{t\in I}$ 

 if: There exist $(u,v)\in C$ and $(u',v')\in C'$, a path $a$ from $u$ to $u'$ and path $b$ from $v$ to $v'$ such that $\{H_\varphi(a(t),t)\}_{t\in I}\dot{\simeq} \{H_\psi(b(t),t)\}_{t\in I}$ .Take  liftings of $\{H_\varphi(a(t),t)\}_{t\in I}$ ,$\{H_\psi(b(t),t)\}_{t\in I}$ denoted by $\tilde c, \tilde d$  determined by $\tilde c(0)=\tilde d(0)$ which implies $\tilde c(1)=\tilde d(1)$ for the universal space $\tilde Y $ is simply connected. Take $\tilde u \in P_X^{-1}(u), \tilde v \in P_X^{-1}(v)$  which determine the lifting  of $a, b$ with these two points as their starting point  denoted by $\tilde a, \tilde b$ .There is an unique lifting of  $\varphi$ denoted $\tilde \varphi$ such that $\tilde \varphi(\tilde u)=\tilde c(0)$ and there is an unique lifting of  $\psi$ denoted $\tilde \psi$ such that $\tilde \psi(\tilde u)=\tilde d(0)$. For the homotopy lifting property we can extend  $\tilde \varphi$ to $\tilde H_\varphi:\tilde X\times I \to \tilde Y$ and extend  $\tilde \psi$ to $\tilde H_\psi:\tilde X\times I \to \tilde Y$ .Denote $\tilde H_\varphi(\_, 1),\tilde H_\psi(\_, 1)$ as $\tilde \varphi', \tilde \psi'$ which are liftings of $\varphi',\psi'$ respectively.
 $$P_Y(\{\tilde H_\varphi(\tilde a(t),t)\}_{t\in I})=\{H_\varphi(a(t),t)\}_{t\in I},P_Y(\{\tilde H_\psi(\tilde b(t),t)\}_{t\in I})= \{H_\psi( b(t),t)\}_{t\in I}$$
  $\{\tilde H_\varphi(\tilde a(t),t)\}_{t\in I})=\tilde c(t)$ and $\{\tilde H_\psi(\tilde b(t),t)\}_{t\in I}=\tilde d(t)$ for they have the same staring point which implies $\tilde \varphi'(\tilde u')=\tilde \varphi'(\tilde v')$ i.e.  $(\tilde u,\tilde v)\in \cvp(\tilde \varphi,\tilde \psi)$ , $(\tilde u',\tilde v')\in \cvp(\tilde \varphi,\tilde \psi)$ such that $P_X(\tilde u,\tilde v)=(u,v)$, $P_X(\tilde u',\tilde v')=(u',v')$. The common value class $C$ of $\varphi$ and $\psi$ and the common value class $C'$ of $\varphi'$ and $\psi'$ are $(H_\varphi,H_\psi)$-related.
\end{proof}

Now we consider the relation of coordinates between homotopies.


\begin{theorem}
Let $H_\varphi: X\times I \to Y$ be a homotopy from  $\varphi$  to $\varphi'$, and let $H_\psi: X\times I \to Y$ be a homotopy from  $\psi$  to $\psi'$. Let $x_0$ be the basepoint of  $X$ and $y_0$ be the basepoint of  $Y$.  If the reference paths $w_\varphi: I, 0, 1\to Y, y_0, \varphi(x_0)$,$w_{\varphi'}: I, 0, 1\to Y, y_0, \varphi'(x_0)$, $w_\psi: I, 0, 1\to Y, y_0, \psi(x_0)$,$w_{\psi'}: I, 0, 1\to Y, y_0, \psi'(x_0)$  are chosen such that 
\begin{equation}\label{Hrelated-ref}
w_{\varphi'} = w_\varphi\ast \{H_\varphi(x_0, t)\}_{t\in[0,1]},\ \ w_{\psi'} = w_\psi 
\ast\{H_\psi(x_0,t)\}_{t\in[0,1]}.
\end{equation}
Then both maps $\tilde\varphi$ and $\tilde\varphi'$ induce the same homomorphism from $\pi_1(X, x_0)$ to $\pi_1(Y, y_0)$, and both maps $\tilde\psi$ and $\tilde\psi'$ induce the same homomorphism from $\pi_1(X, x_0)$ to $\pi_1(Y, y_0)$. Hence, $$\tilde \varphi_{\pi}\backslash \pi_1(Y, y_0)/\tilde \psi_{\pi}=\tilde \varphi'_{\pi}\backslash \pi_1(Y, y_0)/\tilde \psi'_{\pi}.$$
Moreover, a common value class of $\varphi$ and $\psi$, and a common value class of $\varphi'$ and $\psi'$ are $(H_\varphi, H_\psi)$-related  if and only if they have the same coordinates.   
\end{theorem}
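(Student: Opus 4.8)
The plan is to do everything with path classes, using one elementary device throughout. Given the homotopy $H_\varphi$ and any path $c$ in $X$ from $x_0$ to a point $u$, the ``filled square'' $(s,t)\mapsto H_\varphi(c(s),t)$ gives
\[
\varphi'(c)\ \dot{\simeq}\ \ell_\varphi^{-1}\ast\varphi(c)\ast\ell_u,
\]
where $\ell_\varphi=\{H_\varphi(x_0,t)\}_{t\in[0,1]}$ is the track of $H_\varphi$ at $x_0$ and $\ell_u=\{H_\varphi(u,t)\}_{t\in[0,1]}$; moreover the diagonal of the same square gives $\{H_\varphi(c(t),t)\}_{t\in I}\ \dot{\simeq}\ \varphi(c)\ast\ell_u\ \dot{\simeq}\ \ell_\varphi\ast\varphi'(c)$. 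The same statements hold for $H_\psi$, with tracks $\ell_\psi$ at $x_0$ and $m_v=\{H_\psi(v,t)\}_{t\in[0,1]}$ at $v$. For the first assertion of the theorem I apply the square identity with $c$ a loop at $x_0$: since $\tilde\varphi'_\pi(\langle c\rangle)=\langle w_{\varphi'}\varphi'(c)w_{\varphi'}^{-1}\rangle$ and $w_{\varphi'}=w_\varphi\ast\ell_\varphi$ by (\ref{Hrelated-ref}), the two copies of $\ell_\varphi^{\pm1}$ cancel and $\tilde\varphi'_\pi(\langle c\rangle)=\langle w_\varphi\varphi(c)w_\varphi^{-1}\rangle=\tilde\varphi_\pi(\langle c\rangle)$; likewise $\tilde\psi'_\pi=\tilde\psi_\pi$. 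Since a double coset space depends only on the two subgroups involved, $\tilde\varphi_\pi\backslash\pi_1(Y,y_0)/\tilde\psi_\pi=\tilde\varphi'_\pi\backslash\pi_1(Y,y_0)/\tilde\psi'_\pi$.

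For the ``only if'' half of the last assertion, suppose the class $C$ of $\varphi,\psi$ and the class $C'$ of $\varphi',\psi'$ are $(H_\varphi,H_\psi)$-related. By Theorem \ref{eq H related} there are $(u,v)\in C$, $(u',v')\in C'$, a path $a$ from $u$ to $u'$ and a path $b$ from $v$ to $v'$ with $\{H_\varphi(a(t),t)\}\ \dot{\simeq}\ \{H_\psi(b(t),t)\}$. By the diagonal identity this reads $\varphi(a)\ast\ell_{u'}\ \dot{\simeq}\ \psi(b)\ast m_{v'}$, so the loop $\varphi(a)\ast\ell_{u'}\ast m_{v'}^{-1}\ast\psi(b)^{-1}$ based at $\varphi(u)=\psi(v)$ is nullhomotopic (here one uses $\varphi'(u')=\psi'(v')$, which holds because $(u',v')$ is a common value pair of $\varphi',\psi'$). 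Now compute the coordinate of $(u',v')$ from Lemma \ref{cvp-coordinate} using the connecting paths $c_{\varphi'}=c_\varphi\ast a$ and $c_{\psi'}=c_\psi\ast b$, which is legitimate because the coordinate does not depend on these choices. Applying the square identity to $c_\varphi\ast a$ and to $c_\psi\ast b$ and substituting (\ref{Hrelated-ref}), the tracks $\ell_\varphi$ and $\ell_\psi$ cancel, leaving
\[
\bigl\langle\, w_\varphi\,\varphi(c_\varphi)\,\bigl[\,\varphi(a)\ast\ell_{u'}\ast m_{v'}^{-1}\ast\psi(b)^{-1}\,\bigr]\,\psi(c_\psi)^{-1}\,w_\psi^{-1}\,\bigr\rangle .
\]
Deleting the bracketed nullhomotopic loop gives $\langle w_\varphi\,\varphi(c_\varphi)\,\psi(c_\psi)^{-1}\,w_\psi^{-1}\rangle$, the coordinate of $(u,v)$. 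Hence $C$ and $C'$ carry the same coordinate.

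For the ``if'' half, assume $C$ and $C'$ have the same coordinate and fix $(u,v)\in C$ and $(u',v')\in C'$. As the connecting paths for $(u,v)$ range over all choices, the representative of its coordinate in $\pi_1(Y,y_0)$ ranges over the whole double coset (this is precisely the computation in the proof of Lemma \ref{cvp-coordinate}), so I may choose connecting paths $c_\varphi,c_\psi$ for $(u,v)$ and $c_{\varphi'},c_{\psi'}$ for $(u',v')$ whose coordinate representatives are \emph{equal} in $\pi_1(Y,y_0)$, not merely in the double coset. Set $a=c_\varphi^{-1}\ast c_{\varphi'}$, a path from $u$ to $u'$, and $b=c_\psi^{-1}\ast c_{\psi'}$, a path from $v$ to $v'$. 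Using $\{H_\varphi(a(t),t)\}\ \dot{\simeq}\ \ell_u\ast\varphi'(a)$, then the square identity for $c_\varphi$ and (\ref{Hrelated-ref}), one obtains $\{H_\varphi(a(t),t)\}\ \dot{\simeq}\ \varphi(c_\varphi)^{-1}\ast w_\varphi^{-1}\ast w_{\varphi'}\ast\varphi'(c_{\varphi'})$, and symmetrically $\{H_\psi(b(t),t)\}\ \dot{\simeq}\ \psi(c_\psi)^{-1}\ast w_\psi^{-1}\ast w_{\psi'}\ast\psi'(c_{\psi'})$. These two paths share the endpoints $\varphi(u)=\psi(v)$ and $\varphi'(u')=\psi'(v')$, so $\{H_\psi(b(t),t)\}^{-1}\ast\{H_\varphi(a(t),t)\}$ is a loop at $\varphi'(u')$; conjugating it to $y_0$ along $w_{\varphi'}\ast\varphi'(c_{\varphi'})$ turns it into the chosen representative of $\mathrm{coord}(u',v')$ times the inverse of the chosen representative of $\mathrm{coord}(u,v)$, which is trivial by the normalization just made. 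Hence $\{H_\varphi(a(t),t)\}\ \dot{\simeq}\ \{H_\psi(b(t),t)\}$, and Theorem \ref{eq H related} now gives that $C$ and $C'$ are $(H_\varphi,H_\psi)$-related.

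The routine parts are the square and diagonal identities together with the bookkeeping of base points and path orientations, which must be tracked carefully so that the cancellations come out right. The one genuinely delicate point is in the ``if'' direction: the hypothesis provides equality of coordinates only as double coset elements, so before the nullhomotopy computation can be run one must first use the freedom in the connecting paths to upgrade it to equality of honest elements of $\pi_1(Y,y_0)$ — this normalization is what makes the two halves fit together.
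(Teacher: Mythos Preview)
Your proof is correct and follows essentially the same route as the paper: both reduce the $(H_\varphi,H_\psi)$-related condition to the path-homotopy criterion of Theorem~\ref{eq H related}, set $c_{\varphi'}=c_\varphi\ast a$ and $c_{\psi'}=c_\psi\ast b$, and compare the two coordinate representatives via the square $(s,t)\mapsto H_\varphi(c(s),t)$. Your packaging via the ``square'' and ``diagonal'' identities is cleaner than the paper's explicit homotopy formulas, and you are more explicit about the ``if'' direction---the paper compresses both directions into a single biconditional at the level of $\pi_1$-representatives and leaves implicit the normalization step (that varying $a,b$ sweeps out the full double coset) which you spell out.
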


\begin{proof}
For any $c\in \pi(X, x_0)$, we have that $$\tilde \psi'_{0, \pi}(c)=w_{\psi'} \psi'(c) (w_{\psi'})^{-1}= w_\psi \ast \{H_\psi(x_0,t)\}_{t\in[0,1]} \psi'(c) ( w_\psi \ast \{H_\psi(x_0,t)\}_{t\in[0,1]})^{-1}.$$ 
There is a homotopy between $w_{\psi'} \psi'(c) (w_{\psi'})^{-1}= w_\psi \ast \{H_\psi(x_0,t)\}_{t\in[0,1]} \psi'(c) ( w_\psi \ast \{H_\psi(x_0,t)\}_{t\in[0,1]})^{-1}$ and $w_\psi \psi(c) ( w_\psi )^{-1}$, which is given by 
$$I\times I\ni (t,s)\mapsto w_\psi \{H_\psi(x_0,st)\} \{H_\psi(c(t),s)\} (w_\psi \{H_\psi(x_0,st)\})^{-1}.$$
The equality $\tilde \varphi'_{0, \pi}(c)=\tilde \varphi_{0, \pi}(c)$ holds for the same reason.
  There exist $(u,v)\in C$ and $(u',v')\in C'$, a path $a$ from $u$ to $u'$ and path $b$ from $v$ to $v'$ such that $\{H_\varphi(a(t),t)\}_{t\in I}\dot{\simeq} \{H_\psi(b(t),t)\}_{t\in I}$ .Take $c_{\varphi'}=c_\varphi a,c_{\psi'}=c_\psi b$ the coordinate of $(u',v')$ is represented by 
$$
\begin{array}{lcl}

w_\varphi \{H_\varphi(x_0, t)\}_{t\in[0,1]} \varphi'(c_\varphi a) (w_\psi \{H_\psi(x_0, t) \}_{t\in[0, 1]}\psi'(c _\psi b))^{-1} \\
 \simeq w_\varphi  \varphi(c_\varphi) \{H_\varphi(u, t)\}_{t\in[0,1]}\varphi'(a)(w_\psi \psi(c _\psi )\{H_\psi(v, t) \}_{t\in[0, 1]}\psi' (b))^{-1} \\
 \simeq w_\varphi  \varphi(c_\varphi) \{H_\varphi(a(t),t)\}_{t\in I}(w_\psi \psi(c _\psi ) \{H_\psi(b(t),t)\}_{t\in I})^{-1}.\\
  
\end{array}$$

The first homotopy is given by
$$I\times I\ni (t,s)\mapsto \{H_\varphi(x_0,st)\} \{H_\varphi(c_\varphi(t),s)\} \{H_\varphi(u,(1-t)s+t)\} $$ $$I\times I\ni (t,s)\mapsto \{H_\psi(x_0,st)\} \{H_\psi(c_\psi(t),s)\} \{H_\psi(v,(1-t)s+t)\}$$

The second homotopy is given by
$$I\times I\ni (t,s)\mapsto \{H_\varphi(u,st)\} \{H_\varphi(a(st),s)\}\{H_\varphi(a((1-t)s+t),(1-t)s+t)\} $$

$$I\times I\ni (t,s)\mapsto \{H_\psi(u,st)\} \{H_\psi(b(st),s)\}\{H_\psi(b((1-t)s+t),(1-t)s+t)\} $$
Thus  $ w_\varphi  \varphi(c_\varphi) (w_\psi \psi(c _\psi ) )^{-1}\simeq w_{\varphi'}  \varphi'(c_{\varphi'}) (w_{\psi'} \psi(c _{\psi'} ) )^{-1}$ if and only if

$\{H_\varphi(a(t),t)\}_{t\in I}\dot{\simeq} \{H_\psi(b(t),t)\}_{t\in I}$ 
\end{proof}

\section{Whitney disks and common value classes}

In this section we are going to use common value class to consider Whitney disks connecting two intersection  $T_\alpha \cap  T_\beta$ of two totally real tori in the symmetric product space $\sym_g(\Sigma_g)$, giving a new perspective to understand the key object in Heegaard Floer homology.

Given a Heegaard diagram $(\Sigma_g, \bf{\alpha}, \bf{\beta})$, two sets of simple loops give two embedding:
$$\iota_\alpha,  \iota_\beta: T^g=(S^1)^g \to (\Sigma_g)^g$$
defined by
$$\iota_\alpha(\theta_1,\ldots,\theta_g)=(\alpha_1(\theta_1),\ldots,\alpha_g(\theta_g)), \  \iota_\beta(\theta_1,\ldots,\theta_g)=
(\beta_1(\theta_1),\ldots,\beta_g(\theta_g)).$$ 
Hence, there is a commutative diagram: 
\[
\begin{tikzcd}
 & (\Sigma_g)^g \arrow[d, "\pi"] \\
T^g=\arrow[ru, "{\iota_\alpha, \iota_\beta}"] \arrow[r, "{\Bar{\iota}_\alpha, \Bar{\iota}_\beta}"] 
& \sym_g(\Sigma_g),                              
\end{tikzcd}
\]
where $\pi$ is the quotient map from the product space $(\Sigma_g)^g$ to its corresponding symmetric product space.





In this setting, since $\iota_\alpha, \iota_\beta$ are both embedding, intersection $T_\alpha\cap T_{\beta}$ of two totally real tori is the same as  $(\bar{\iota}_\alpha\times \bar{\iota}_\beta) (\cvp (\bar{\iota}_\alpha,\bar{\iota}_\beta))$.  Of most importance is:

\begin{theorem} \label{eq-preclass-D}
Let $x, y\in T_\alpha\cap T_{\beta}$ be two intersection of two totally real tori. Then there is a Whitney disk connecting $x$ and $y$ if and only if $(\bar{\iota}^{-1}_\alpha(x), \bar{\iota}^{-1}_\beta(x))$ and  $(\bar{\iota}^{-1}_\alpha(y), \bar{\iota}^{-1}_\beta(y))$ are in the same common value class of  $\bar\iota_\alpha$ and $\bar \iota_\beta$. 
\end{theorem}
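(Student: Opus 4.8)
The plan is to reduce the statement about Whitney disks to the comparison of common value classes via the observation already recorded in the excerpt, namely that a Whitney disk connecting $x$ and $y$ exists precisely when there are paths $\gamma' \colon (I,0,1) \to (T_\alpha, x, y)$ and $\gamma'' \colon (I,0,1) \to (T_\beta, x, y)$ that are homotopic rel endpoints in $\sym_g(\Sigma_g)$. Using the embeddings $\bar\iota_\alpha, \bar\iota_\beta \colon T^g \to \sym_g(\Sigma_g)$, such a path $\gamma'$ corresponds to a path $a$ in $T^g$ from $\bar\iota_\alpha^{-1}(x)$ to $\bar\iota_\alpha^{-1}(y)$ (with $\gamma' = \bar\iota_\alpha \circ a$), and similarly $\gamma'' = \bar\iota_\beta \circ b$ for a path $b$ in $T^g$ from $\bar\iota_\beta^{-1}(x)$ to $\bar\iota_\beta^{-1}(y)$. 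Thus the existence of a Whitney disk is equivalent to the existence of paths $a, b$ in $T^g$ with these prescribed endpoints such that $\bar\iota_\alpha \circ a \simeq \bar\iota_\beta \circ b$ rel endpoints in $\sym_g(\Sigma_g)$.

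Next I would invoke Theorem \ref{eq H related} with $X = T^g$, $Y = \sym_g(\Sigma_g)$, $\varphi = \psi = \bar\iota_\alpha$ versus $\bar\iota_\beta$, but more directly the relevant fact is the standard covering-space criterion that underlies Lemma \ref{cvp-coordinate}: two common value pairs $(u_1,v_1)$ and $(u_2,v_2)$ of maps $\varphi, \psi \colon X \to Y$ lie in the same common value class if and only if there are paths $a$ from $u_1$ to $u_2$ and $b$ from $v_1$ to $v_2$ in $X$ with $\varphi \circ a \simeq \psi \circ b$ rel endpoints in $Y$. Applying this with $\varphi = \bar\iota_\alpha$, $\psi = \bar\iota_\beta$, $(u_1,v_1) = (\bar\iota_\alpha^{-1}(x), \bar\iota_\beta^{-1}(x))$, and $(u_2,v_2) = (\bar\iota_\alpha^{-1}(y), \bar\iota_\beta^{-1}(y))$ — both of which are genuine common value pairs because $\bar\iota_\alpha(\bar\iota_\alpha^{-1}(x)) = x = \bar\iota_\beta(\bar\iota_\beta^{-1}(x))$ and likewise for $y$ — gives exactly the equivalence between the Whitney-disk condition reformulated above and membership of the two preimage pairs in a single common value class. (This covering-space criterion can either be cited from \cite{Zhao} or deduced as the special case of Theorem \ref{eq H related} in which both homotopies $H_\varphi, H_\psi$ are constant, using the remark following Definition \ref{def-Hrelated}.)

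The only point requiring a little care is the passage between a Whitney disk in the sense of Definition \ref{disk} — a map $(\mathbb{D}^2, e_1, e_2, -i, i) \to (\sym_g(\Sigma_g), T_\alpha, T_\beta, x, y)$ — and the pair of homotopic boundary paths; this is precisely the elementary reformulation the excerpt states immediately after Definition \ref{disk}, so I would simply cite that and reparametrize the disk by cutting along the diameter through $-i$ and $i$ to extract $\gamma'$, $\gamma''$ and the homotopy between them, and conversely fill in a disk from such data. I expect this reparametrization step to be routine; the genuinely substantive content is the covering-space criterion for common value classes, which is already available from the earlier sections. Hence the main obstacle is essentially bookkeeping: making sure that the preimages $\bar\iota_\alpha^{-1}(x)$ etc.\ are well defined (they are, since $\iota_\alpha, \iota_\beta$ are embeddings, so $\bar\iota_\alpha, \bar\iota_\beta$ are injective on $T^g$), and that the endpoint conditions on $a$ and $b$ match the endpoint conditions of the Whitney disk on $e_1$ and $e_2$.

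\begin{proof}
By the remark following Definition \ref{disk}, a Whitney disk connecting $x$ and $y$ exists if and only if there are paths $\gamma' \colon (I,0,1) \to (T_\alpha, x, y)$ and $\gamma'' \colon (I,0,1) \to (T_\beta, x, y)$ with $\gamma' \simeq \gamma''$ rel $\{0,1\}$ in $\sym_g(\Sigma_g)$. Since $\iota_\alpha$ and $\iota_\beta$ are embeddings, so are $\bar\iota_\alpha$ and $\bar\iota_\beta$, and every path in $T_\alpha$ from $x$ to $y$ is of the form $\bar\iota_\alpha \circ a$ for a unique path $a$ in $T^g$ from $\bar\iota_\alpha^{-1}(x)$ to $\bar\iota_\alpha^{-1}(y)$; similarly every path in $T_\beta$ from $x$ to $y$ is $\bar\iota_\beta \circ b$ for a unique path $b$ in $T^g$ from $\bar\iota_\beta^{-1}(x)$ to $\bar\iota_\beta^{-1}(y)$. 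Hence a Whitney disk connecting $x$ and $y$ exists if and only if there exist such paths $a$ and $b$ with $\bar\iota_\alpha \circ a \simeq \bar\iota_\beta \circ b$ rel $\{0,1\}$ in $\sym_g(\Sigma_g)$.

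Now $(\bar\iota_\alpha^{-1}(x), \bar\iota_\beta^{-1}(x))$ and $(\bar\iota_\alpha^{-1}(y), \bar\iota_\beta^{-1}(y))$ are common value pairs of $\bar\iota_\alpha$ and $\bar\iota_\beta$, since $\bar\iota_\alpha(\bar\iota_\alpha^{-1}(x)) = x = \bar\iota_\beta(\bar\iota_\beta^{-1}(x))$ and likewise for $y$. Apply Theorem \ref{eq H related} with $X = T^g$, $Y = \sym_g(\Sigma_g)$, $\varphi = \varphi' = \bar\iota_\alpha$, $\psi = \psi' = \bar\iota_\beta$, and the constant homotopies $H_\varphi = I_{\bar\iota_\alpha}$, $H_\psi = I_{\bar\iota_\beta}$; by the remark following Definition \ref{def-Hrelated}, a common value class $C$ is $(I_{\bar\iota_\alpha}, I_{\bar\iota_\beta})$-related to a common value class $C'$ if and only if $C = C'$. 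Thus Theorem \ref{eq H related} asserts that $(\bar\iota_\alpha^{-1}(x), \bar\iota_\beta^{-1}(x))$ and $(\bar\iota_\alpha^{-1}(y), \bar\iota_\beta^{-1}(y))$ lie in the same common value class of $\bar\iota_\alpha$ and $\bar\iota_\beta$ if and only if there are paths $a$ from $\bar\iota_\alpha^{-1}(x)$ to $\bar\iota_\alpha^{-1}(y)$ and $b$ from $\bar\iota_\beta^{-1}(x)$ to $\bar\iota_\beta^{-1}(y)$ in $T^g$ with $\bar\iota_\alpha \circ a \simeq \bar\iota_\beta \circ b$ rel $\{0,1\}$ in $\sym_g(\Sigma_g)$.

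Combining the two equivalences, there is a Whitney disk connecting $x$ and $y$ if and only if $(\bar\iota_\alpha^{-1}(x), \bar\iota_\beta^{-1}(x))$ and $(\bar\iota_\alpha^{-1}(y), \bar\iota_\beta^{-1}(y))$ are in the same common value class of $\bar\iota_\alpha$ and $\bar\iota_\beta$.
\end{proof}
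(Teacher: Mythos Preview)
Your proof is correct and somewhat cleaner than the paper's own argument. The paper does not invoke Theorem~\ref{eq H related}; instead it reproves the special case needed here by hand, working explicitly with the universal covers $p_T\colon \mathbb{R}^g\to T^g$ and $p\colon\tilde W\to\sym_g(\Sigma_g)$, choosing lifts of $\bar\iota_\alpha,\bar\iota_\beta$ and of the two boundary paths, and using simple connectivity of $\tilde W$ to match endpoints. Your route is more economical because the covering-space work has already been packaged in Theorem~\ref{eq H related}, so you can simply specialize it to constant homotopies and cite the remark after Definition~\ref{def-Hrelated}; the paper's version is more self-contained but essentially duplicates that earlier lifting argument in this concrete setting.

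Two small points you should tighten. First, the implication ``$\iota_\alpha$ is an embedding, hence $\bar\iota_\alpha=\pi\circ\iota_\alpha$ is an embedding'' is not automatic, since $\pi\colon(\Sigma_g)^g\to\sym_g(\Sigma_g)$ is a quotient by the $S_g$-action; one needs that the curves $\alpha_i$ are pairwise disjoint so that no nontrivial permutation carries a point of $\mathrm{Im}\,\iota_\alpha$ to another such point. The paper spells this out at the start of its proof, and you should add a clause to that effect. Second, Theorem~\ref{eq H related} literally produces paths $a,b$ between \emph{some} pairs $(u,v)\in C$ and $(u',v')\in C'$, not necessarily your chosen pairs $(\bar\iota_\alpha^{-1}(x),\bar\iota_\beta^{-1}(x))$ and $(\bar\iota_\alpha^{-1}(y),\bar\iota_\beta^{-1}(y))$; either note that the same criterion applied within a single class lets you concatenate to reach the specific endpoints, or observe directly from Definition~\ref{def-Hrelated} with constant homotopies that $(I_\varphi,I_\psi)$-related forces the determining lifts to coincide, so the path criterion in fact holds for \emph{every} choice of representatives. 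Neither issue affects the validity of your approach.
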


\begin{proof} Since the curves $\alpha_i$ are disjoint from each other $\alpha_j$ curves when $j$ is not equal to $i$ and the curves $\beta_i$ are disjoint from each other $\beta_j$ curves when $j$ is not equal to $i$, we have that if $\bar{x}=(\bar{x_1},\ldots, \bar{x_g})\in Im\iota_\alpha$ then $\sigma \bar{x}=(\bar{x}_{\sigma(1)},\ldots, \bar{x}_{\sigma(g)})\notin Im\iota_\alpha$. Thus $\pi | Im\iota_\alpha$ is an injective map and $\pi | Im\iota_\beta$ is an injective map with the same reason. It follows that $\bar{\iota}_\alpha$ and $\bar{\iota}_\beta$ are both embedding. For any $w\in T_\alpha\cap T_{\beta}$, both  $\bar{\iota}^{-1}_\alpha(w)$ and  $\bar{\iota}^{-1}_\beta(w)$ are well-defined points in $T^g$, and $(\bar{\iota}^{-1}_\alpha(w),\bar{\iota}^{-1}_\beta(w))$ is a common value pair of $\bar{\iota}_\alpha$ and $\bar{\iota}_\beta$.

If: Suppose that two intersections $x,y\in T_\alpha\cap T_{\beta}$ lie in the same common value class. Consider the universal covering $p:\tilde W\to \sym_g\Sigma_g$ of $\sym_g\Sigma_g$ and the universal covering $p_T: R^g \to T^g$ of $T^g$. By definition, we have that $$(\bar{\iota}^{-1}_\alpha(x),  \bar{\iota}^{-1}_\beta(x)), (\bar{\iota}^{-1}_\alpha(y),  \bar{\iota}^{-1}_\beta(y)) \in (p\times p)(({\tilde{\bar{\iota}}_\alpha \times  \tilde{\bar{\iota}}_\beta})^{-1}(\Delta(\tilde{Y}^2)))$$ 
for some lifting $\tilde{\bar{\iota}}_\alpha$ of $\bar{\iota}_\alpha$ and some lifting $\tilde{\bar{\iota}}_\beta$ of $\bar{\iota}_\beta$.
Thus there exist $\tilde{x}, \tilde{y} \in \tilde{W}$ and $\tilde{u}_1,\tilde{u}_2,\tilde{v}_1,\tilde{v}_2\in R^g$ such that 
\[\begin{tikzcd}
(R^g\times R^g, (\tilde{u}_1,\tilde{u}_2)) 
 \arrow[r, "\tilde{\bar{\iota}}_\alpha \times \tilde{\bar{\iota}}_\beta "] 
 \arrow[d, "p_T\times p_T"] 
  & (\tilde{W}\times\tilde{W}, (\tilde{x}, \tilde{x})) \arrow[d, "p\times p"] \\
(T^g\times T^g, (\bar{\iota}^{-1}_\alpha(x),  \bar{\iota}^{-1}_\beta(x))) \arrow[r, "\bar\iota_\alpha \times \bar\iota_\beta"] 
 & (\sym_g(\Sigma_g) \times \sym_g(\Sigma_g), (x,x\bar))                 \end{tikzcd}
\]
and that
\[\begin{tikzcd}
(R^g\times R^g, (\tilde{v}_1,\tilde{v}_2)) 
 \arrow[r, "\tilde{\bar{\iota}}_\alpha \times \tilde{\bar{\iota}}_\beta "] 
 \arrow[d, "p_T\times p_T"] 
  & (\tilde{W}\times\tilde{W}, (\tilde{y}, \tilde{y})) \arrow[d, "p\times p"] \\
(T^g\times T^g, (\bar{\iota}^{-1}_\alpha(y),  \bar{\iota}^{-1}_\beta(y))) 
  \arrow[r, "\bar\iota_\alpha \times  \bar\iota_\beta"] 
 & (\sym_g(\Sigma_g) \times \sym_g(\Sigma_g), (y,y))                       
\end{tikzcd}
\]

The connectivity of  $R^g$ implies that there is a path $(\tilde{a}(t), \tilde{b}(t))$ connecting $(\tilde{u}_1,\tilde{u}_2)$ and  $(\tilde{v}_1,\tilde{v}_2)$. 
Since $\tilde{\bar{\iota}}_{\alpha}(\tilde{a}(t)) $ and 
$\tilde{\bar{\iota}}_{\beta}(\tilde{b}(t)) $ have the same endpoints $\tilde x$ and $\tilde y$, and $\tilde{W}$ is simply connected, it follows that $\tilde{\bar{\iota}}_{\alpha}(\tilde{a}(t)) \dot{\simeq} \tilde{\bar{\iota}}_{\alpha}(\tilde{b}(t)) $. Then \[
\bar{\iota}_\alpha p_T\tilde{a}(t) =p\tilde{\bar{\iota}}_{\alpha}(\tilde{a}(t)) \dot{\simeq} p \tilde{\iota}_{\beta}(\tilde{b}(t)) = \bar{\iota}_\beta p_T\tilde{b}(t).
\]
Obviously, $\bar{\iota}_\alpha p_T\tilde{a}(t)$ is in $T_\alpha$ and $\bar{\iota}_\beta p_T\tilde{b}(t)$ is in $T_{\beta}$, and they have the same endpoints $x,y$. Hence, there is a Whitney disk connecting $x$ and $y$.

Only if: Suppose that there is a Whitney disk connecting $x$ and $y$, i.e. there exist paths $f(t): (I, 0,1) \to (T_{\alpha}, x, y)$ and $g(t):(I, 0,1) \to (T_\beta, x, y)$ such that $f(t) \dot{\simeq} g(t)$.

Take a $\tilde{x}\in p^{-1}(x)$ in the universal covering space $\tilde W$, and take a lifting $\tilde{f}$ of $f$ and a $\tilde{g}$ of $g$ such that $\tilde{f}(0)=\tilde{g}(0)=\tilde{x}$. Since $f(t) \dot{\simeq} g(t)$, we have $\tilde{f}(t) \simeq \tilde{g}(t)$.

The universal covering space $\tilde W$ is simply connected, which implies that $\tilde{f}(1)=\tilde{g}(1)$, and we denote the endpoint as $\tilde{y}$. Since $\bar{\iota}_\alpha$ and $\bar{\iota}_\beta$ are both embedding  $(\bar{\iota}^{-1}_\alpha(f(t)),\bar{\iota}^{-1}_\beta(g(t))$ is a well-defined curve in $T^g\times T^g$ with starting point $(\bar{\iota}^{-1}_\alpha(x),\bar{\iota}^{-1}_\beta(x))$ and with endpoint $(\bar{\iota}^{-1}_\alpha(y),\bar{\iota}^{-1}_\beta(y))$. Choose a lifting of path  $(\bar{\iota}^{-1}_\alpha(f(t)),\bar{\iota}^{-1}_\beta(g(t))$ denoted by $(F(t), G(t))$ determined by the starting point $(u, v)\in (p_T\times p_T)^{-1}(\bar{\iota}^{-1}_\alpha(x),\bar{\iota}^{-1}_\beta(x))$. Denote the lifting of $\bar{\iota}_\alpha \times \bar{\iota}_\beta$ as $\tilde{\bar{\iota}}_\alpha \times \tilde{\bar{\iota}}_\beta$ determined by mapping $(u,v)$ to $(\tilde{x}, \tilde{x})$. Note that
$$
\begin{array}{rcl}
(p\times p)(\tilde{\bar{\iota}}_\alpha \times \tilde{\bar{\iota}}_\beta )(F(t), G(t))
& = &(\bar{\iota}_\alpha \times \bar{\iota}_\beta)(p_T\times p_T)(F(t),G(t))\\
& = & (\bar{\iota}_\alpha \times \bar{\iota}_\beta)(\bar{\iota}^{-1}_\alpha(f(t)),\bar{\iota}^{-1}_\beta(g(t)) \\
& = & (f(t),g(t)).
\end{array}
$$
We have that $(\tilde{\bar{\iota}}_\alpha \times \tilde{\bar{\iota}}_\beta )(F(t), G(t))$ is a lifting of $(f(t),g(t))$ with the same starting point as $(\tilde{f}(t), \tilde{g}(t))$. Then  $(\tilde{\bar{\iota}}_\alpha \times \tilde{\bar{\iota}}_\beta )(F(t), G(t))=(\tilde{f}(t), \tilde{g}(t))$ for the uniqueness properties of the lifting.  Thus,  
$$
\begin{array}{rcl}
(\bar{\iota}^{-1}_\alpha(x),\bar{\iota}^{-1}_\beta(x))
& = & (p_T\times p_T)(F(0), G(0))\\
& = & (p_T\times p_T) (\tilde{\bar{\iota}}_\alpha \times \tilde{\bar{\iota}}_\beta )^{-1}(\tilde{f}(0), \tilde{g}(0)) \\
& = & (p_T\times p_T) (\tilde{\bar{\iota}}_\alpha \times \tilde{\bar{\iota}}_\beta )^{-1}(\tilde{x},\tilde{x})
\end{array}$$ and
$$
\begin{array}{rcl}
(\bar{\iota}^{-1}_\alpha(y),\bar{\iota}^{-1}_\beta(y))
& = &(p_T\times p_T)(F(1), G(1))\\
& = & (p_T\times p_T) (\tilde{\bar{\iota}}_\alpha \times \tilde{\bar{\iota}}_\beta )^{-1}(\tilde{f}(1), \tilde{g}(1))\\
& = & (p_T\times p_T) (\tilde{\bar{\iota}}_\alpha \times \tilde{\bar{\iota}}_\beta )^{-1}(\tilde{y},\tilde{y})\\
\end{array}$$ 
That is,  $(\bar{\iota}^{-1}_\alpha(x),\bar{\iota}^{-1}_\beta(x))$ and $(\bar{\iota}^{-1}_\alpha(y),\bar{\iota}^{-1}_\beta(y))$ are in the same common value class.
\end{proof}
\begin{corollary}\label{H_1(M)}
There is an injective map from the set of Whitney disks to $H_{1}(M)$ where $M$ is the three-manifold defined by the Heegaard diagram 

$(\Sigma_g, \alpha_1,\ldots,\alpha_g, \beta_1,\ldots,\beta_g)$.
    
\end{corollary}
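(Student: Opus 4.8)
The plan is to build the claimed map out of Theorem~\ref{eq-preclass-D}, the coordinate of Lemma~\ref{cvp-coordinate}, and the classical computation of the fundamental group of a symmetric product. Concretely, by Theorem~\ref{eq-preclass-D} the statement ``there is a Whitney disk connecting $x$ and $y$'' is, for $x,y\in T_\alpha\cap T_\beta$, equivalent to ``$(\bar{\iota}_\alpha^{-1}(x),\bar{\iota}_\beta^{-1}(x))$ and $(\bar{\iota}_\alpha^{-1}(y),\bar{\iota}_\beta^{-1}(y))$ lie in the same common value class of $\bar{\iota}_\alpha$ and $\bar{\iota}_\beta$''; in particular this is an equivalence relation, and a Whitney disk determines the common value class of its two endpoints. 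Applying Lemma~\ref{cvp-coordinate} with $X=T^g$, $Y=\sym_g(\Sigma_g)$, $\varphi=\bar{\iota}_\alpha$ and $\psi=\bar{\iota}_\beta$ (after fixing a basepoint and reference liftings), each common value class receives a coordinate in the double coset
\[
(\bar{\iota}_\alpha)_\pi(\pi_1(T^g))\ \backslash\ \pi_1(\sym_g(\Sigma_g))\ /\ (\bar{\iota}_\beta)_\pi(\pi_1(T^g)),
\]
and, as recorded immediately after that lemma, two common value pairs carry the same coordinate if and only if they lie in the same common value class. So sending a Whitney disk to the coordinate of the class of its endpoints gives a map whose induced map on the set of equivalence classes (equivalently, on the realized common value classes) is injective. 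It remains to identify this double coset naturally with $H_1(M)$.

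For that, I would invoke the classical fact that for $g\geq 2$ the inclusion $\Sigma_g\hookrightarrow\sym_g(\Sigma_g)$, $x\mapsto\{x,x_0,\dots,x_0\}$, induces an isomorphism $\pi_1(\sym_g(\Sigma_g))\cong H_1(\Sigma_g;\mathbb{Z})$: it is surjective because any configuration can be homotoped to one with all but one point at $x_0$, and it kills commutators because two of the moving points can be swapped inside $\sym_g$; for $g=1$ one has $\sym_1(\Sigma_1)=\Sigma_1$ and the same conclusion. Under this isomorphism, the commutative diagram relating $\iota_\alpha$, $\bar{\iota}_\alpha$ and $\pi$ shows that the generator of $\pi_1(T^g)$ running once around the $i$-th $S^1$ factor is sent to $[\alpha_i]\in H_1(\Sigma_g)$; hence $(\bar{\iota}_\alpha)_\pi(\pi_1(T^g))$ is the subgroup generated by $[\alpha_1],\dots,[\alpha_g]$, and similarly $(\bar{\iota}_\beta)_\pi(\pi_1(T^g))=\langle[\beta_1],\dots,[\beta_g]\rangle$.

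Since $\pi_1(\sym_g(\Sigma_g))\cong H_1(\Sigma_g)$ is abelian, the double coset set collapses to the quotient group $H_1(\Sigma_g;\mathbb{Z})\big/\big(\langle[\alpha_i]\rangle+\langle[\beta_i]\rangle\big)$, and the Mayer--Vietoris sequence for the Heegaard decomposition $M=H_\alpha\cup_{\Sigma_g}H_\beta$ (equivalently, the standard handle presentation recalled in Section~2) identifies this quotient canonically with $H_1(M;\mathbb{Z})$. Composing the two identifications with the coordinate map produces the desired injection into $H_1(M)$. I expect the only real work to be bookkeeping: justifying $\pi_1(\sym_g(\Sigma_g))\cong H_1(\Sigma_g)$ together with the abelianness that turns the double coset into a quotient group, and tracing basepoints through the commutative diagram so that $(\bar{\iota}_\alpha)_\pi(\pi_1(T^g))$ and $(\bar{\iota}_\beta)_\pi(\pi_1(T^g))$ are correctly matched with $\langle[\alpha_i]\rangle$ and $\langle[\beta_i]\rangle$ (with the genus-one case checked on its own); injectivity itself is already furnished by Theorem~\ref{eq-preclass-D}.
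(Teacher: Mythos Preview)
Your proposal is correct and follows essentially the same route as the paper: invoke Theorem~\ref{eq-preclass-D} to pass from Whitney disks to common value classes, take the double coset coordinate from Lemma~\ref{cvp-coordinate}, and then identify $\tilde{\bar{\iota}}_{\alpha,\pi}\backslash \pi_1(\sym_g\Sigma_g)/\tilde{\bar{\iota}}_{\beta,\pi}$ with $H_1(M)$ via $\pi_1(\sym_g\Sigma_g)\cong H_1(\Sigma_g)$ and the handle presentation. The paper's proof is a single line asserting this chain of isomorphisms, whereas you spell out the justification for $\pi_1(\sym_g\Sigma_g)\cong H_1(\Sigma_g)$, for the images of $(\bar{\iota}_\alpha)_\pi$ and $(\bar{\iota}_\beta)_\pi$, and for the Mayer--Vietoris step; this extra detail is welcome but does not change the argument.
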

 \begin{proof}
   The set of  common value class is $$\tilde{\bar{\iota}}_{\alpha,\pi}\backslash \pi_1(\sym_g\Sigma_g, y_0)/\tilde{\bar{\iota}}_{\beta,\pi}\cong \tilde{\bar{\iota}}_{\alpha,\pi}\backslash H_1(\Sigma_g)/\tilde{\bar{\iota}}_{\beta,\pi} \cong H_1(M)$$ , then by \ref{eq-preclass-D} there is an injective map from the set of Whitney disks to $H_{1}(M)$
 \end{proof}

\begin{corollary}
Let $M$ be the three-manifold defined by the Heegaard diagram $(\Sigma_g, \alpha_1,\ldots,\alpha_g, \beta_1,\ldots,\beta_g)$. If $H_{1}(M)=0$, then for any $x, y\in T_\alpha\cap T_{\beta}$, there is a Whitney disk connecting $x$ and $y$.
\end{corollary}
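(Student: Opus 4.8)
The plan is to read this off directly from Theorem~\ref{eq-preclass-D} together with the double--coset description of common value classes that is already set up in Corollary~\ref{H_1(M)}. By Theorem~\ref{eq-preclass-D}, a Whitney disk joining $x$ and $y$ exists if and only if the common value pairs $(\bar{\iota}^{-1}_\alpha(x),\bar{\iota}^{-1}_\beta(x))$ and $(\bar{\iota}^{-1}_\alpha(y),\bar{\iota}^{-1}_\beta(y))$ of $\bar{\iota}_\alpha$ and $\bar{\iota}_\beta$ lie in the same common value class, so it suffices to prove that when $H_1(M)=0$ there is only one common value class of $\bar{\iota}_\alpha$ and $\bar{\iota}_\beta$.

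To get there I would first invoke Lemma~\ref{cvp-coordinate}: every common value pair of $\bar{\iota}_\alpha$ and $\bar{\iota}_\beta$ carries a well-defined coordinate in the double coset $\tilde{\bar{\iota}}_{\alpha,\pi}\backslash\pi_1(\sym_g(\Sigma_g),y_0)/\tilde{\bar{\iota}}_{\beta,\pi}$, and two pairs lie in the same common value class exactly when these coordinates agree. Then, following the computation in the proof of Corollary~\ref{H_1(M)}, I would use the identification $\pi_1(\sym_g(\Sigma_g))\cong H_1(\Sigma_g)$ to rewrite this double coset as $\tilde{\bar{\iota}}_{\alpha,\pi}\backslash H_1(\Sigma_g)/\tilde{\bar{\iota}}_{\beta,\pi}$; since the target is abelian and the images of $\tilde{\bar{\iota}}_{\alpha,\pi}$ and $\tilde{\bar{\iota}}_{\beta,\pi}$ are the subgroups generated by $[\alpha_1],\dots,[\alpha_g]$ and by $[\beta_1],\dots,[\beta_g]$ respectively, this double coset is simply the quotient of $H_1(\Sigma_g)$ by all the classes $[\alpha_i]$ and $[\beta_j]$, which is $H_1(M)$ by the standard handle--decomposition computation. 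Hence the set of coordinates realised by common value pairs injects into $H_1(M)$.

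Under the hypothesis $H_1(M)=0$ this forces every common value pair of $\bar{\iota}_\alpha$ and $\bar{\iota}_\beta$ to carry the trivial coordinate, so any two such pairs lie in the same common value class; in particular $(\bar{\iota}^{-1}_\alpha(x),\bar{\iota}^{-1}_\beta(x))$ and $(\bar{\iota}^{-1}_\alpha(y),\bar{\iota}^{-1}_\beta(y))$ do, and the ``if'' direction of Theorem~\ref{eq-preclass-D} then supplies the Whitney disk. I do not expect a genuine obstacle here; the statement is essentially the remark that the invariant produced by Corollary~\ref{H_1(M)} vanishes. The one step deserving care is the identification $\pi_1(\sym_g(\Sigma_g))\cong H_1(\Sigma_g)$ and the check that under it $\tilde{\bar{\iota}}_{\alpha,\pi}$ and $\tilde{\bar{\iota}}_{\beta,\pi}$ send the standard generators of $\pi_1(T^g)$ to the homology classes of the curves $\alpha_i$ and $\beta_i$ --- precisely the computation already done for Corollary~\ref{H_1(M)} --- after which the conclusion is immediate, and one recovers the familiar fact that over an integer homology sphere any two intersection points admit a topological Whitney disk between them.
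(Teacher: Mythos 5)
Your proof is correct and takes essentially the same route the paper intends: the corollary is stated in the paper with no proof precisely because it is an immediate consequence of Corollary~\ref{H_1(M)} (which gives the injection of common value classes into $H_1(M)$) combined with Theorem~\ref{eq-preclass-D}, and your reconstruction — coordinates in the double coset, identification of the double coset with $H_1(M)$, vanishing forcing a single class, then the ``if'' direction of Theorem~\ref{eq-preclass-D} — is exactly that chain of reasoning spelled out.
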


\section{Coordinates of intersections}

In this section, we shall show the way to get the coordinates of intersections of two totally real tori.

By Lemma~\ref{cvp-coordinate}, each intersection in $T_\alpha\cap T_\beta$ has a coordinate in the double coset $\mathrm{Im}\bar\iota_{\alpha,D} \backslash \pi_1(\sym_g(\Sigma_g))/\mathrm{Im}\bar\iota_{\beta,D}$, which is actually homology group $H_1(M)$ of given $3$-dimensional manifold by  corollary \ref{H_1(M)} .
\begin{remark}
The coordinate of the intersection point class is in $H_1(M)$, which is isomorphic to $H^2(M, \mathbb{Z})$ by Poincaré duality. In \cite{spinc}, a choice of $s_0\in Spin^C(M)$ gives an affine isomorphism between $Spin^C(M)$ and $H^2(M, \mathbb{Z})$. Thus, our coordinate provides a correspondence between the common value  class and $Spin^C(M)$.  It is established that for any $x, y \in T_\alpha \cap T_\beta$, a Whitney disk connects them if and only if their corresponding $Spin^C$ structure is identical. This enables the decomposition of the Heegaard Floer Homology group by $Spin^C$ structure.
\end{remark}

We use following presentation of fundamental group $\pi_1(\Sigma_g) $ of surface $\Sigma_g$ of genus $g$:
\begin{equation}\label{presentation}
\pi_1(F_g, y_0) = \langle c_1, c_2, \ldots, c_{2g}\mid c_1 c_2 \cdots c_{2g} = c_{2g} \cdots c_2 c_1\rangle,
\end{equation}

\begin{theorem}
Let $\alpha_i$ be a loop determine by a word $u_{i,1}u_{i,2}\cdots u_{i,m_i}$ for $i=1,2,\ldots, g$, and let $\beta_i$ be a loop determine by a cyclically $D$-reduced word $v_{j,1}v_{j,2}\cdots v_{j,n_j}$ for $j=1,2,\ldots, g$, where each $u_{*,*}$ and $v_{*,*}$ is a letter in the set $\{c^{\pm}_1, c^{\pm}_2, \ldots, c^{\pm}_{2g}\}$. Then each common value pair of $\bar\iota_\alpha: T^g\to \sym_g(\Sigma_g)$ and $\bar\iota_\beta: T^g\to \sym_g(\Sigma_g)$  \begin{equation}
\sum_{i=1}^g [u_{i,1}]+\cdots+[u_{i, k_i}] +[v_{\sigma(i), l_\sigma(i)+1}]+\cdots+[v_{\sigma(i), n_{\sigma(i)}}] \in H_1(M),
\end{equation}
where $\sigma$ is an element in the symmetric group $S_g$, and $1\le k_j\le m_i$, $1\le l_j \le n_j$ for all $i,j$.
\end{theorem}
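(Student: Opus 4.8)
The plan is to compute, for each intersection point, the double--coset coordinate produced by Lemma~\ref{cvp-coordinate} in the case $\varphi=\bar\iota_\alpha$, $\psi=\bar\iota_\beta$, $X=T^g$, $Y=\sym_g(\Sigma_g)$, choosing the auxiliary paths so that the loop representing the coordinate breaks up --- one coordinate of $\sym_g(\Sigma_g)$ at a time --- into subarcs of the curves $\alpha_i$ and $\beta_j$. First I would record what an intersection point is: because the $\alpha_i$ are pairwise disjoint and the $\beta_j$ are pairwise disjoint, a point $x\in T_\alpha\cap T_\beta$ is the same datum as a permutation $\sigma\in S_g$ together with points $x_i\in\alpha_i\cap\beta_{\sigma(i)}$, $i=1,\dots,g$; then $\bar\iota_\alpha^{-1}(x)$ has $i$-th coordinate $x_i\in\alpha_i$, while $\bar\iota_\beta^{-1}(x)$ has $\sigma(i)$-th coordinate $x_i\in\beta_{\sigma(i)}$. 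This is the $\sigma$ of the statement, $k_i$ will be the length of the initial segment of $u_{i,1}\cdots u_{i,m_i}$ traversed on reaching $x_i$ along $\alpha_i$, and $l_{\sigma(i)}$ the analogous position of $x_i$ along $\beta_{\sigma(i)}$ (cyclic $D$-reducedness of the $\beta$-words is what makes the positions $l_j$ unambiguous).

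Next I would fix the base data. Let $P$ be the base point of $\Sigma_g$ used in~\eqref{presentation}, set $y_0=\{P,\dots,P\}\in\sym_g(\Sigma_g)$, and fix tails from $P$ to each $\alpha_i(0)$ and each $\beta_j(0)$ realizing the given words. For the reference paths $w_\varphi,w_\psi$ I would take ``staircase'' paths in $\sym_g(\Sigma_g)$ that slide the coordinates one at a time along these tails, the $\psi$-staircase ordered by $\sigma(1),\dots,\sigma(g)$. For $c_\varphi$ I would take the staircase in $T^g$ sliding the $i$-th coordinate forward along $\alpha_i$ until it reaches $x_i$, and for $c_\psi$ the staircase sliding the $\sigma(i)$-th coordinate along $\beta_{\sigma(i)}$ until it reaches $x_i$, traversing $\beta_{\sigma(i)}$ in the direction for which the subarc occurring in $L$ below is the terminal segment $v_{\sigma(i),l_{\sigma(i)}+1}\cdots v_{\sigma(i),n_{\sigma(i)}}$, again ordered by $\sigma(1),\dots,\sigma(g)$. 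Lemma~\ref{cvp-coordinate} then presents the coordinate of $(\bar\iota_\alpha^{-1}(x),\bar\iota_\beta^{-1}(x))$ as the class in $\pi_1(\sym_g(\Sigma_g),y_0)$ of the loop $L=w_\varphi\,\bar\iota_\alpha(c_\varphi)\,\bar\iota_\beta(c_\psi)^{-1}\,w_\psi^{-1}$, read in the double coset, which by Corollary~\ref{H_1(M)} is $H_1(M)$.

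The crux is the decomposition of $[L]$, and here I would use the classical fact --- already invoked for Corollary~\ref{H_1(M)} --- that $\pi_1(\sym_g(\Sigma_g))\cong H_1(\Sigma_g)$, with any map $\Sigma_g\to\sym_g(\Sigma_g)$ obtained by adjoining $g-1$ fixed points inducing the Hurewicz homomorphism $\pi_1(\Sigma_g)\to H_1(\Sigma_g)$. Because slides carried out in distinct coordinates of $\sym_g(\Sigma_g)$ commute up to homotopy, after pairing each tail with the curve--arc in the same coordinate one rewrites $L$ --- running through the intermediate configurations $\{x_1,\dots,x_i,P,\dots,P\}$ --- as a product of conjugates of loops $\lambda_1,\dots,\lambda_g$, where $\lambda_i$ slides a single coordinate out to $x_i$ along the tail and $\alpha_i$ and back to $P$ along $\beta_{\sigma(i)}$ and its tail. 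Abelianness of $\pi_1(\sym_g(\Sigma_g))$ collapses this product of conjugates to the sum $[L]=\sum_{i=1}^g[\lambda_i]$, and by the quoted fact each $[\lambda_i]$ is the Hurewicz image of the loop in $\Sigma_g$ along which it slides; by construction that loop represents $(u_{i,1}\cdots u_{i,k_i})(v_{\sigma(i),l_{\sigma(i)}+1}\cdots v_{\sigma(i),n_{\sigma(i)}})$, whence $[\lambda_i]=[u_{i,1}]+\cdots+[u_{i,k_i}]+[v_{\sigma(i),l_{\sigma(i)}+1}]+\cdots+[v_{\sigma(i),n_{\sigma(i)}}]$ in $H_1(\Sigma_g)$. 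Summing over $i$ and projecting to $H_1(M)=H_1(\Sigma_g)/\langle[\alpha_1],\dots,[\alpha_g],[\beta_1],\dots,[\beta_g]\rangle$ yields the asserted element, and since every intersection point arises from such a pair $(\sigma,(x_i))$, these are exactly the coordinates that occur.

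I expect the main obstacle to be this decomposition: verifying carefully that $[L]$ splits off one coordinate at a time --- the fact that slides in distinct coordinates commute up to homotopy, and the telescoping of the resulting conjugates into a sum, which is legitimate only once $\pi_1$ is known to be abelian --- and checking that the base point $y_0$ and the tails enter solely through the reference paths and the presentations of the curves, so that no spurious term survives in $H_1(M)$. The remaining ingredients --- the combinatorial description of $T_\alpha\cap T_\beta$, the presentation of $H_1(M)$ as a quotient of $H_1(\Sigma_g)$, and $\pi_1(\sym_g(\Sigma_g))\cong H_1(\Sigma_g)$ --- are standard and are used elsewhere in the paper.
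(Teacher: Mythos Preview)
Your argument is correct, but it takes a different route from the paper's.  The paper's proof is essentially a two-line bootstrap: it quotes an earlier result (\cite[Theorem~2.7]{Zhao2022}) giving the double-coset coordinate $u_{i,1}\cdots u_{i,k_i}\,v_{j,l_j+1}\cdots v_{j,n_j}$ for a single pair of loops $\alpha_i,\beta_j:S^1\to\Sigma_g$, observes that the coordinate for $\iota_\alpha,\sigma\iota_\beta:T^g\to(\Sigma_g)^g$ is then just the $g$-tuple of these (the product structure does the work), and finally pushes forward along the quotient $(\Sigma_g)^g\to\sym_g(\Sigma_g)$, whose effect on $\pi_1$ is abelianisation.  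In particular the paper never writes down staircase paths or manipulates loops in $\sym_g(\Sigma_g)$ directly; the ``one coordinate at a time'' decomposition is free because it happens upstairs in the honest product before symmetrising.

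You instead work inside $\sym_g(\Sigma_g)$ from the start, applying Lemma~\ref{cvp-coordinate} with explicit reference paths and then invoking $\pi_1(\sym_g(\Sigma_g))\cong H_1(\Sigma_g)$ to break the resulting loop into a sum of single-coordinate contributions.  This is a genuine alternative: it is self-contained (no appeal to \cite{Zhao2022}) and makes the geometry of the coordinate visible, at the cost of the bookkeeping you flag --- the staircase construction, the commutation of slides, and the telescoping of conjugates.  The paper's route is shorter precisely because factoring through $(\Sigma_g)^g$ lets the cited single-curve result and the product structure absorb exactly that bookkeeping.
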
 

\begin{proof}
Recall from \cite[Theorem 2.7]{Zhao2022} that any common value class of $\alpha_i:S^1\to \Sigma_g$ and $\beta_j:S^1\to \Sigma_g$ has a coordinate $$u_{i,1}\cdots u_{i, k_i}v_{j,l_j+1}\cdots v_{j,n_j}\in \mathrm{Im} \alpha_i \backslash \pi_1(\Sigma_g)/\mathrm{Im} \beta_j.$$
Thus,the coordinates of the common value classes of $\iota_\alpha, \sigma\iota_\beta: T^g\to (\Sigma_g)^g$ are the products of elements of the forms above. We obtain immediately our conclusion by natural map from $(\Sigma_g)^g$ to $\sym_g\Sigma_g$ and its induced homomorphism, which is exactly an abelization.
\end{proof}

By this Theorem, one can decide easily if two intersections lie in the same class, i. e. there is a Whitney disk connecting them.


Suppose that there is a Whitney disk connecting two points 
$x,y\in T_\alpha\cap T_\beta$, i. e. 
$(\bar{\iota}^{-1}_\alpha(\mathbf{x}),\bar{\iota}^{-1}_\beta(\mathbf{x}))$ 
and 
$(\bar\iota^{-1}_\alpha(\mathbf{y}),\bar\iota^{-1}_\beta(\mathbf{y}))$ 
lies in the same common value class. By \cite[Proposition 2.15]{spincc}, if $g>1$, the homotopy class $\pi_2(\bf{x},\bf{y})$ of Whitney disks is the same as $\mathbb{Z}\oplus H^1(M)$. The first factor is $\pi_2(\sym_g(\Sigma_g))$ if $g>2$, is $\pi_2(\sym_g(\Sigma_g))$ modulo the action of $\pi_1(\sym_g(\Sigma_g))$ if $g=2$.

Although there are a lot of Whitney disks, by definition of Heegaard Floer homology (see Definition~\ref{def-HF}), only the disks with Maslov index one involved.

The surface $\Sigma_g$ can be regarded as a complex manifold, and therefore has a canonical orientation. Once each loop in Heegaard diagram is given an orientation, for example if each is given by an element of $\pi_1(\Sigma_g)$, then each intersection $\alpha_i\cap \beta_j$ has a well-defined intersection number, which is $\pm 1$ because of the transverality.

From \cite[Definition 3.1]{Zhao}, we know that the index of the pair $P=(\bar{\iota}^{-1}_\alpha(\mathbf{x}),\bar{\iota}^{-1}_\beta(\mathbf{x}))$ is a composition 
$$
\begin{array}{ll}
H_*((T^g)^2)&\to 
  H_*((T^g)^2, (T^g)^2 - P)\\
 & \stackrel{e^{-1}_*}{\to} 
  H_*(N(P), N(P)-P) \\
& \stackrel{(\bar{\iota}_\alpha\times \bar{\iota}_\beta)_*}{\to} 
 H_*((\sym_g \Sigma_g)^2, (\sym_g \Sigma_g)^2-\Delta).
\end{array}$$

It is obvious that

\begin{prop}
Let $\mathbf{x}=(x_1, x_2, \ldots, x_g) \in T_\alpha\cap T_\beta$ be an intersection , where  $x_i\in \alpha_i\cap \beta_{\sigma(i)}$ for some $\sigma\in S_g$. Then its intersection number $I(\bar{\iota}_\alpha, \bar{\iota}_\beta; \mathbf{x})$ is equal to $\mathrm{sgn}(\sigma)\prod I(\alpha_i, \beta_{\sigma(i)},x_i)$. Moreover, the homomorphism index $$\mathcal{L}=\mathcal{L}(\bar{\iota}_\alpha\times \bar{\iota}_\beta, (\bar{\iota}^{-1}_\alpha(\mathbf{x}),\bar{\iota}^{-1}_\beta(\mathbf{x})), \Delta)$$ of common value pair 
$(\bar{\iota}^{-1}_\alpha(\mathbf{x}),\bar{\iota}^{-1}_\beta(\mathbf{x}))$ is given by $\mathcal{L}([(T^g)^2]) = I(\bar{\iota}_\alpha, \bar{\iota}_\beta; \mathbf{x}) [\Delta] $, 
where $[(T^g)^2]$ is the fundamental class, and $[\Delta]$ is the Thom class of the diagonal $\Delta$ in $(\sym_g(\Sigma_g))^2$. 
\end{prop}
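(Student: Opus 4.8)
The plan is to verify the two assertions separately, since they are really statements about local intersection data. First I would address the intersection number $I(\bar{\iota}_\alpha,\bar{\iota}_\beta;\mathbf{x})$. The key observation is that near $\mathbf{x}$ the symmetric product $\sym_g(\Sigma_g)$ is locally modeled on $(\Sigma_g)^g$ via the branched covering $\pi$, and since $\mathbf{x}$ has $g$ \emph{distinct} coordinates $x_1,\dots,x_g$ with $x_i\in\alpha_i\cap\beta_{\sigma(i)}$, a neighborhood of $\mathbf{x}$ in $\sym_g(\Sigma_g)$ is an honest product of neighborhoods of the $x_i$ in $\Sigma_g$, on which $\pi$ restricts to a diffeomorphism. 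Under this identification $T_\alpha$ is locally $\prod_i \alpha_i$ and $T_\beta$ is locally $\prod_i \beta_{\sigma(i)}$ (after reindexing by $\sigma$). I would then invoke the standard multiplicativity of local intersection numbers under products, giving $\prod_i I(\alpha_i,\beta_{\sigma(i)},x_i)$, together with the sign $\mathrm{sgn}(\sigma)$ that records the permutation needed to match the ordering of the $\alpha$-factors with the $\beta$-factors; this is exactly the sign of the permutation on the coordinate axes of the tangent spaces, which contributes $\mathrm{sgn}(\sigma)$ to the determinant computing the oriented intersection number.

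Next I would deduce the statement about the homomorphism index $\mathcal{L}$. By the definition recalled just before the proposition, $\mathcal{L}$ is the composition of the excision isomorphism $e_*^{-1}$, a tubular-neighborhood identification, and $(\bar{\iota}_\alpha\times\bar{\iota}_\beta)_*$, landing in $H_*\big((\sym_g\Sigma_g)^2,(\sym_g\Sigma_g)^2-\Delta\big)$. Since $(\bar{\iota}^{-1}_\alpha(\mathbf{x}),\bar{\iota}^{-1}_\beta(\mathbf{x}))$ is an isolated, transverse common value pair, the source local homology $H_*\big((T^g)^2,(T^g)^2-P\big)$ is infinite cyclic generated by the image of the fundamental class $[(T^g)^2]$, and the target local homology at the diagonal is infinite cyclic generated by the Thom class $[\Delta]$. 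A homomorphism between two copies of $\mathbb{Z}$ is multiplication by an integer, and by the very definition of the oriented intersection number (as the degree of the map on these local homology groups, i.e.\ the Thom-class pullback evaluated on the fundamental class) that integer is precisely $I(\bar{\iota}_\alpha,\bar{\iota}_\beta;\mathbf{x})$. Hence $\mathcal{L}([(T^g)^2]) = I(\bar{\iota}_\alpha,\bar{\iota}_\beta;\mathbf{x})[\Delta]$, and combining with the first part gives the explicit formula.

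The main obstacle I anticipate is the bookkeeping at the first step: making the local product-structure identification of $\sym_g(\Sigma_g)$ near $\mathbf{x}$ genuinely compatible with the orientations of $T_\alpha$, $T_\beta$, $\sym_g(\Sigma_g)$, and the complex/canonical orientation of $\Sigma_g$, so that the sign $\mathrm{sgn}(\sigma)$ comes out correctly rather than up to an ambiguous global sign. In particular one has to be careful that the ordering conventions defining $T_\alpha$ and $T_\beta$ as quotients by $S_g$ do not themselves reintroduce a sign, and that the complex orientation on the diagonal $\Delta\subset(\sym_g\Sigma_g)^2$ used to fix $[\Delta]$ matches the one induced from $\Sigma_g$. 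Once the local model is pinned down, everything else is a routine application of the multiplicativity of intersection numbers under products and the functoriality of local homology, so I would keep that part brief and concentrate the write-up on the orientation conventions.
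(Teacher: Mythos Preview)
Your proposal is correct and follows essentially the same approach as the paper: both arguments reduce the second assertion to observing that $H_{2g}((T^g)^2)$ and $H_{2g}((\sym_g\Sigma_g)^2,(\sym_g\Sigma_g)^2-\Delta)$ are each infinite cyclic and that the map between them is, by definition, multiplication by the local intersection number. Your treatment of the first formula via the local product model near a point with distinct coordinates and multiplicativity of intersection numbers is more explicit than the paper, which simply declares the proposition ``obvious'' and appeals directly to the definition of intersection number, but the underlying idea is the same.
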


\begin{proof}
By comparing the non-triviality of   the homology groups $H_*((T^g)^2)$ and $H_*((\sym_g \Sigma_g)^2, (\sym_g \Sigma_g)^2-\Delta)$, dimension $2g$ is the unique dimension on which a homomorphism between them can be non-zero. Note that $H_*((T^g)^2)\cong H_*((\sym_g \Sigma_g)^2, (\sym_g \Sigma_g)^2-\Delta)\cong \mathbb{Z}$. This proposition holds from definition of intersection number.  
\end{proof}

By the construction of a diagram $(\Sigma_g, \alpha, \beta)$, loops $\alpha_i$ and $\beta_j$ are assumed to intersect transvarsally. Thus, we have always that  $I(\bar{\iota}_\alpha, \bar{\iota}_\beta; \mathbf{x})=\pm 1$ for any point in $T_\alpha \cap T_\beta$. Of course, the sign of $I(\bar{\iota}_\alpha, \bar{\iota}_\beta; \mathbf{x})$ depends on the choice of the orientation of the domain $T^g$ and the orientation of the diagonal $\Delta$ in $(\sym_g(\Sigma_g))^2$. The diagonal $\Delta$ has a classical orientation because of complex structure, but the orientation of $T^g$ depends on the choices of orientation of $\alpha_i$'s or $\beta_j$' and their orders. Fortunately, the  $I(\bar{\iota}_\alpha, \bar{\iota}_\beta; \mathbf{x})=I(\bar{\iota}_\alpha, \bar{\iota}_\beta; \mathbf{y})$ does not depend on these choices. 

\begin{theorem}\cite{spinc}
Let $\phi$ be a Whitney disk connecting $\mathbf{x}$ and $\mathbf{y}$. Then its Maslov index 
$\mu(\phi) \equiv 0 \mod 2$ if $I(\bar{\iota}_\alpha, \bar{\iota}_\beta; \mathbf{x})=I(\bar{\iota}_\alpha, \bar{\iota}_\beta; \mathbf{y}) $; $\mu(\phi) \equiv 1 \mod 2$ otherwise, i.e. $I(\bar{\iota}_\alpha, \bar{\iota}_\beta; \mathbf{x})= -I(\bar{\iota}_\alpha, \bar{\iota}_\beta; \mathbf{y}) $.     
\end{theorem}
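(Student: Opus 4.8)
The plan is to reduce the statement to the additivity of the Maslov index under juxtaposition of Whitney disks, together with an orientation computation at the constant disks over $\mathbf{x}$ and $\mathbf{y}$ that feeds in the local intersection signs of the preceding Proposition. Recall that $\mu$ defines a map $\pi_2(\mathbf{x},\mathbf{y})\to\mathbb{Z}$ with $\mu(\phi_1\ast\phi_2)=\mu(\phi_1)+\mu(\phi_2)$ whenever $\phi_1\in\pi_2(\mathbf{x},\mathbf{y})$ and $\phi_2\in\pi_2(\mathbf{y},\mathbf{z})$ are juxtaposed, that $\mu$ vanishes on the constant disk, and that any two elements of $\pi_2(\mathbf{x},\mathbf{y})$ differ by an element on which $\mu$ is even (this is part of the content of \cite{spinc}: such a difference lies in $\pi_2(\sym_g(\Sigma_g))$ together with an $H_1(M)$-summand, where $\mu$ equals $2n_z$, resp.\ $0$). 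Hence $\mu(\phi)\bmod 2$ depends only on the ordered pair $(\mathbf{x},\mathbf{y})$; choosing, within each common value class, a base point and reference disks $\phi_{\mathbf{w}}$ to each intersection point $\mathbf{w}$, I would set $\epsilon(\mathbf{w})=\mu(\phi_{\mathbf{w}})\bmod 2$, so that $\mu(\phi)\equiv\epsilon(\mathbf{x})-\epsilon(\mathbf{y})\pmod 2$ for every $\phi\in\pi_2(\mathbf{x},\mathbf{y})$. The hypothesis that a Whitney disk connects $\mathbf{x}$ and $\mathbf{y}$ ensures, via Theorem~\ref{eq-preclass-D}, that $\mathbf{x}$ and $\mathbf{y}$ lie in a common such class, so this difference is meaningful.

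The substantive step is to identify $\epsilon(\mathbf{w})$ with the local intersection sign $I(\bar\iota_\alpha,\bar\iota_\beta;\mathbf{w})$. Here I would invoke the standard index and orientation theory for linearized Cauchy--Riemann operators with totally real boundary conditions (Floer--Hofer, Oh; as used in \cite{spinc}): the operator $D_\phi$ attached to the boundary value problem of Definition~\ref{disk} is Fredholm of index $\mu(\phi)$, and the parity of that index is exactly the obstruction to matching up, along the path of such problems running from the constant disk at $\mathbf{x}$ through $\phi$ to the constant disk at $\mathbf{y}$, the orientations of the determinant line $\det D$ at the two ends. At a constant disk over an intersection point $\mathbf{w}$ the operator degenerates to the finite-dimensional comparison of $T_{\mathbf{w}}T_\alpha\oplus T_{\mathbf{w}}T_\beta$ with $T_{\mathbf{w}}\sym_g(\Sigma_g)$ along the diagonal, and the resulting orientation of $\det D$ is precisely the sign $I(\bar\iota_\alpha,\bar\iota_\beta;\mathbf{w})$ of the Proposition giving $I(\bar\iota_\alpha,\bar\iota_\beta;\mathbf{x})=\mathrm{sgn}(\sigma)\prod_i I(\alpha_i,\beta_{\sigma(i)},x_i)$, once the global orientation of $T^g$ (a choice of orders and orientations of the $\alpha_i$, $\beta_j$) and the complex orientation of $\Delta\subset(\sym_g(\Sigma_g))^2$ are fixed. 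Since $T^g\cong T_\alpha$, $T^g\cong T_\beta$ and $\Delta\cong\sym_g(\Sigma_g)$ are connected and orientable, these global orientations restrict coherently to all points, so $(-1)^{\epsilon(\mathbf{w})}$ agrees with $I(\bar\iota_\alpha,\bar\iota_\beta;\mathbf{w})$ up to one overall sign on each common value class — consistent with the observation made just before the statement that the truth value of $I(\bar\iota_\alpha,\bar\iota_\beta;\mathbf{x})=I(\bar\iota_\alpha,\bar\iota_\beta;\mathbf{y})$ is independent of these choices.

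Combining the two steps, $\mu(\phi)\equiv 0\pmod 2$ iff $\epsilon(\mathbf{x})=\epsilon(\mathbf{y})$ iff $I(\bar\iota_\alpha,\bar\iota_\beta;\mathbf{x})=I(\bar\iota_\alpha,\bar\iota_\beta;\mathbf{y})$; and because $I(\bar\iota_\alpha,\bar\iota_\beta;\mathbf{w})=\pm1$ always, the only alternative $I(\bar\iota_\alpha,\bar\iota_\beta;\mathbf{x})=-I(\bar\iota_\alpha,\bar\iota_\beta;\mathbf{y})$ then forces $\mu(\phi)\equiv1\pmod 2$, which is the assertion. I expect the main obstacle to be precisely the identification in the middle paragraph: that the determinant line of $D_\phi$ at a constant disk carries the local intersection sign. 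This is the one point where genuine analysis of moduli spaces of pseudoholomorphic disks is required, rather than the elementary bookkeeping with common value classes; the rest is additivity of $\mu$ and the connectedness of the two tori and of the diagonal.
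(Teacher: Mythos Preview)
The paper does not supply its own proof of this theorem: it is stated with the attribution \cite{spinc} and no proof environment follows. The only related commentary is the sentence immediately after, which invokes \cite[Lemma 7.1]{Whitneydisk} and the proof of \cite[Theorem 4.9]{spincc} to observe that, for fixed endpoints, the Maslov indices of all Whitney disks form either the set of all even integers or the set of all odd integers --- which is exactly the first step of your argument (that $\mu(\phi)\bmod 2$ depends only on the ordered pair $(\mathbf{x},\mathbf{y})$).

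Your outline --- additivity of $\mu$ under juxtaposition, reduction to a $\mathbb{Z}/2$-valued function $\epsilon$ of the endpoint alone, and then identification of $(-1)^{\epsilon(\mathbf{w})}$ with the local intersection sign $I(\bar\iota_\alpha,\bar\iota_\beta;\mathbf{w})$ via the determinant line of the linearized Cauchy--Riemann operator at the constant disk --- is the standard route in the Ozsv\'ath--Szab\'o reference, and you have correctly isolated the one genuinely analytic ingredient. There is therefore nothing in the present paper to compare against: your proposal supplies what the authors defer to \cite{spinc}.
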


By \cite[Lemma 7.1]{Whitneydisk} (see the proof of \cite[Theorem 4.9]{spincc}), if we fix the boundary, the set of Maslov indices of all possible  Whitney disks connecting given two intersection is either the set of all even numbers of the set of all odd numbers.

\bibliographystyle{plain}
\bibliography{123.bib}

\begin{thebibliography}{10}

\bibitem{Fukaya}
D.~Auroux.
\newblock A beginner’s introduction to fukaya categories.
\newblock {\em Contact and symplectic topology}, 26, 2014.

\bibitem{Brown}
R.~F. Brown.
\newblock {\em The Lefschetz Fixed Point Theorem}.
\newblock Glenview (Ill.): Scott, Foresman, 1971.

\bibitem{R-Gautschi}
R.~Gautschi.
\newblock Floer homology of algebraically finite mapping classes.
\newblock {\em J. Symplectic Geom.}, pages 715--765, 2003.

\bibitem{Zhao}
Y.~Gu and X.~Zhao.
\newblock Common value pairs and their estimations.
\newblock {\em Bulletin of the Belgian Mathematical Society Simon Stevin}, 24(4):725--739, 2017.

\bibitem{Zhao2022}
Y.~Gu and X.~Zhao.
\newblock Geometric intersections of loops on surfaces.
\newblock {\em Topology and its Applications}, 318:108205, 2022.

\bibitem{Jiang}
B.~Jiang.
\newblock A primer of nielsen fixed point theory.
\newblock In {\em Handbook of Topological Fixed Point Theory}. 2005.

\bibitem{Lipshitz}
R.~Lipshitz.
\newblock A cylindrical reformulation of heegaard floer homology.
\newblock {\em Geom. Topol. 10 (2006), 955–1097}, 2006.

\bibitem{McCord1997}
C.~K. McCord.
\newblock A nielsen theory for intersection numbers.
\newblock {\em Fund. Math.}, 152(2):117--150, 1997.

\bibitem{Ni-fibred-knot}
Y.~Ni.
\newblock Knot floer homology detects fibred knots.
\newblock {\em Invent. Math. 170, 577–608}, 2007.

\bibitem{Ni}
Y.~Ni.
\newblock Link floer homology detects the thurston norm.
\newblock {\em Geom. Topol. 13(5): 2991-3019}, 2009.

\bibitem{OZ-knot-genus}
P.~Ozsv\'{a}th and Z.~Szab\'{o}.
\newblock Holomorphic disks and genus bounds.
\newblock {\em Geom. Topol. 8 (2004), 311–334}, 2004.

\bibitem{spincc}
P.~Ozsv\'{a}th and Z.~Szab\'{o}.
\newblock Holomorphic disks and topological invariants for closed three manifolds.
\newblock {\em Annals of Mathmatics,159,1027-1158}, 2004.

\bibitem{spincc-2}
P.~Ozsv\'{a}th and Z.~Szab\'{o}.
\newblock Holomorphic disks and topological invariants for closed three manifolds.
\newblock {\em Annals of Mathmatics,159, 1159-1245}, 2004.

\bibitem{OZ-Thurston-norm}
P.~Ozsv\'{a}th and Z.~Szab\'{o}.
\newblock Link floer homology and the thurston norm.
\newblock {\em J. Amer. Math. Soc. 21 (2008), 671–709}, 2008.

\bibitem{Whitneydisk}
Zoltán~Szab\'{o} Peter~Ozsv\'{a}th.
\newblock An introduction to heegaard floer homology.
\newblock {\em Floer homology, gauge theory, and low-dimensional topology 5, 3-27}, 2004.

\bibitem{Wang}
S.~Sarkar and J.~Wang.
\newblock An algorithm for computing some heegaard floer homologies.
\newblock {\em Annals of Mathematics, 171, 1213–1236}, 2010.

\bibitem{spinc}
Z.~Szab\'{o}.
\newblock Lecture notes on heegaard floer homology.
\newblock In {\em Low Dimensionl Topology}. American mathmatical Society,Institute for Advanced Study, 2009.

\end{thebibliography}
\end{document}